\title{A Language Hierarchy and Kitchens-Type Theorem for Self-Similar Groups }
\author{Andrew Penland$^*$}
\address{*Corresponding Author. Department of Mathematics and Computer Science \\
Western Carolina University \\
Cullowhee, NC 28723 USA}
\email{adpenland@email.wcu.edu}
\author[Z. \v{S}uni\'{c}]{Zoran \v{S}uni\'{c}}
\address{Department of Mathematics \\
Hofstra University \\
Hempstead, NY 11549 USA}
\email{zoran.sunic@hofstra.edu}
\date{\today}
\DeclareMathOperator{\Aut}{Aut}
\DeclareMathOperator{\Sym}{Sym}
\DeclareMathOperator{\Stab}{Stab}
\DeclareMathOperator{\id}{id}
\DeclareMathOperator{\Triv}{Triv}
\DeclareMathOperator{\supp}{supp}
\newtheorem{theorem}{Theorem}
\newtheorem{cor}[theorem]{Corollary}
\newtheorem{proposition}[theorem]{Proposition}
\newtheorem{lemma}[theorem]{Lemma}
\newtheorem*{theoremA}{Theorem A}
\newtheorem*{theoremB}{Theorem B}
\theoremstyle{definition}
\newtheorem{example}[theorem]{Example}
\newtheorem{definition}[theorem]{Definition}
\begin{document}

\begin{abstract}
We generalize the notion of self-similar groups of infinite tree automorphisms to allow for groups which are defined on a tree but may not act faithfully on it. The elements of such a group correspond to labeled trees which may be recognized by a tree automaton (e.g. Rabin, B\"{u}chi, etc.), or considered as elements of a tree shift (e.g. of finite type, sofic) as in symbolic dynamics. We give examples to show how self-similar groups defined in this way can be separated into different tree language hierarchies. As the main result, extending the classical result of Kitchens on one-dimensional group shifts, we provide a sufficient condition for a self-similar group whose elements form a sofic tree shift to be a tree shift of finite type. As an application, we show that the closures of certain self-similar groups of rooted $k$-ary tree automorphisms that satisfy an algebraic law are not Rabin-recognizable, that is, they can not be described within the second order theory of $k$ successors. In both the main result and in the application a crucial role is played by a distinguished branched subgroup structure of the groups under consideration. 
\end{abstract}

\keywords{self-similar groups, tree shifts, rooted tree automorphisms, tree automata, finitely constrained groups, branch groups, Rabin automata, compact groups, totally disconnected groups}

\subjclass[2010]{20E08, 37B10, 20E18, 37B50, 68Q45, 03D05}

\maketitle

\section{Introduction} 

In this paper, we consider regular, rooted trees with vertices labeled by elements of some finite alphabet. Our goal is to connect the computational, dynamical, and group-theoretic properties of these objects. As background and motivation for the present work, let us briefly describe some different perspectives on labeled trees, and how these perspectives relate to each other.

For our purposes, a {\em  $k$-regular, rooted tree} is an infinite directed graph with no non-oriented cycles which has a distinguished vertex called the {\em root}, such that each vertex has $k$ distinct {\em children}. (A vertex $v$ is a {\em child} of a vertex $w$ if there is an edge from $w$ to $v$.) Such a tree occurs as the {\em right Cayley graph} of a finitely generated free monoid, as follows. If $X$ is a finite set, then we write $X^*$ for the set of all finite words in $X$. This set forms a monoid with concatenation as the binary operation and the empty word $\epsilon$ as the identity. The right Cayley graph of this monoid has all words $w \in X^*$ as its vertices, with a directed edge from $v$ to $vx$ for all $v \in X^*$ and $x \in X$; this  graph clearly forms an $|X|$-regular tree whose root is the empty word $\epsilon$. 

When $|X| = 1$, the free monoid on $X$ is infinite cyclic and thus is isomorphic to the set of nonnegative integers under addition. The corresponding graph has the structure of a rooted, 1-regular tree, i.e. an infinite, one-way path where each vertex has exactly one child. We shall refer to this as the {\em one-dimensional case}. 

Given a fixed $k$-regular, rooted tree $\mathcal{T}$ and a finite alphabet $A$, one may consider the space of all $A$-labelings of $\mathcal{T}$, i.e. all possible functions from the vertices of $\mathcal{T}$ to $A$, which we denote by $A^{\mathcal{T}}$. Such spaces, consisting of labeled trees, appear in many different areas of mathematics; here we give a non-exhaustive list. 

$\bullet$ \textbf{Computation Theory}. In the theory of computation, a subset of $A^{\mathcal{T}}$ is called a {\em language}. There is a large body of work on these languages and the tree automata which accept them. In particular, the one-dimensional case of a one-way, infinite path has been extensively studied in theoretical computer science. Labelings of this path correspond to sequences or right-infinite words over the finite alphabet $A$, with finite connected subgraphs viewed as finite words over $A$. Hence, this is where one finds the classical notions of codes, languages, and various classes of automata which accept either finite or infinite words (such as in~\cite{NerodeAutomata}). 

$\bullet$ \textbf{Symbolic Dynamics.} Labeled trees also occur in modern symbolic dynamics, which is concerned with the properties of {\em shift spaces} on arbitrary groups or semigroups (as in ~\cite{CAGroups} and ~\cite{SymbolicHyperbolic}). If $T$ is a semigroup and $A$ is a finite alphabet, the {\em full shift} $A^T$ is defined as the space of all functions from $T$ to $A$. The full shift is equipped with the so-called {\em prodiscrete topology}, as well as a {\em shift action} $\sigma_t$ for each element $t \in T$. Each such shift action is a continuous self-mapping of the prodiscrete topological space $A^T$, and a subset $Y \subseteq A^T$ is called {\em shift-invariant} if $\sigma_t(y) \in Y$ for each $t \in T$ and $y \in Y$. A subset of the full shift $A^T$ is called a {\em shift space} if it is both shift-invariant and topologically closed as a subspace in the prodiscrete topology. 

A {\em pattern} $p$ is a function from a finite subset of $T$ to $A$. It is well-known that any shift space can be defined by declaring some collection of {\em forbidden patterns} which do not appear in any configuration in the space. If a finite collection of forbidden patterns may be used, the shift is called a {\em shift of finite type.} 

For any finite set $X$ there is a corresponding free monoid $X^*$ whose right Cayley graph has the structure of a tree. Hence, for a finite alphabet $A$, configurations of the full shift $A^{X^*}$ over this monoid correspond to the type of labeled trees which we consider. 

Again, in this area, the one-dimensional case has been extensively studied. The study of the connections between properties of one-dimensional shift spaces whose allowed or forbidden patterns form certain languages is well-established (see, for instance, the early chapters of ~\cite{LindMarcus}) and remains active. In the more general case of a rooted $k$-regular tree, Aubrun and Beal have examined algebraic and algorithmic aspects of various classes of tree languages(see~\cite{AubrunBeal1}, ~\cite{AubrunBeal2},~\cite{AubrunBeal3},~\cite{AubrunBeal2011}). Ceccherini-Silberstein, Coornaert, Fiorenza, and the second author have studied decision problems and automata related to {\em sofic tree shifts}~\cite{SoficTreeShiftCA}. 

$\bullet$ \textbf{Group Theory.} The labeled trees that we consider also arise in the area of {\em groups of tree automorphisms}. Such groups, including the well-known {\em Grigorchuk} group (introduced in~\cite{Grigorchuk-Burnside-1980}) and {\em Gupta-Sidki} group (introduced in~\cite{Gupta-Burnside-1983}), which are included in a class of groups now sometimes called {\em GGS}-groups. These groups have risen to prominence in the last few decades after they have been used as the solution to several decades-old open problems in group theory (see~\cite{Problems} for an overview for the first Grigorchuk group). Particularly interesting classes of groups in this area include {\em self-similar groups} (see the monograph~\cite{SelfSimilarGroups} for a comprehensive overview) and branch groups (a thorough overview of which is in ~\cite{BranchGroups}). 

If $X$ is a finite set, an automorphism of the right Cayley graph of $X^*$ is called an {\em $X^*$-automorphism}. The set of all $X^*$-automorphisms forms a group, which we denote by $\Aut(X^*)$. For each $k \geq 0$, the set $X^{[k]}$ consists of all words in $X^*$ whose length is at most $k$; this set inherits a subgraph structure from that of $X^*$, and we write $\Aut(X^{[k]})$ for the automorphisms of this subgraph. There is a natural group homomorphism $\pi_k: \Aut(X^*) \rightarrow \Aut(X^{[k]})$ given by restriction of the action of an automorphism to this finite subset. The kernel of $\pi_k$ consists of elements which fix each element of $X^{[k]}$ and is denoted by $\Stab(k)$. There is also a natural homomorphism $\Aut(X^{[k+1]}) \rightarrow \Aut(X^{[k]})$ for each $k \geq 0$, and this system of finite quotients and homomorphisms may be used to define $\Aut(X^*)$ as a {\em profinite topological group.} 

Each element $g \in \Aut(X^*)$ corresponds to a labeling of $X^*$ by the finite alphabet $\Sym(X)$; this labeling is called the {\em portrait of g}. Portraits provide a way to visualize the action of a particular automorphism, as well as a bijective mapping from the group $\Aut(X^*)$ to the full shift over $X^*$ with alphabet $\Sym(X)$. In Section 5 of ~\cite{Problems}, Grigorchuk observed that the portraits of the closure of the first Grigorchuk group in the profinite topology on $\Aut(X^*)$ formed a shift of finite type in the corresponding full shift. This led him to define a {\em group of finite type} as a group of tree automorphisms whose portraits form a shift of finite type when considered as a subshift of $(\Sym(X))^{X^*}$. 

Groups of finite type, also known as \textit{finitely constrained groups of tree automorphisms}, are characterized in the following theorem. 

\begin{theorem}\label{t:regular-branch}
Let $G$ be a level-transitive group of tree automorphisms of $X^*$ and $s \geq 0$. The following are equivalent.
\begin{enumerate}
\item[(i)] The group $G$ is the closure of some self-similar, regular branch group $H$, branching over the level $s$ stabilizer of $H$.
\item[(ii)] The group $G$ is a finitely constrained group defined by forbidden patterns of size $s + 1$.
\end{enumerate}
\end{theorem}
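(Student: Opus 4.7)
The plan is to exhibit a direct correspondence between the portraits on $X^{[s]}$ of elements of $H$ and the finite-type patterns defining $G$. Set $\mathcal{P}$ to be the (finite) set of portraits on $X^{[s]}$ realized by elements of $H$, and let $\tilde{G}$ denote the finitely constrained group whose allowed patterns of size $s+1$ are exactly $\mathcal{P}$. The goal is then to prove $\overline{H}=\tilde{G}$ in direction (i)$\Rightarrow$(ii), and to show that any $G$ satisfying (ii) can play the role of $H$ itself in (i). For the easy inclusion $\overline{H}\subseteq\tilde{G}$: given $h\in H$ and any vertex $v$, self-similarity places $h|_v$ in $H$, so its portrait on $X^{[s]}$ lies in $\mathcal{P}$; this portrait equals the window of $h$ at $v$, so every window of $h$ is allowed. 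As membership in $\tilde{G}$ is a closed condition in the prodiscrete topology, the inclusion extends to $\overline{H}$.

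For the reverse inclusion $\tilde{G}\subseteq\overline{H}$, take $g\in\tilde{G}$ and build, by induction on $n$, elements $h_n\in H$ that agree with $g$ on $X^{[n]}$. The base case follows from the window of $g$ at the root lying in $\mathcal{P}$: some $h\in H$ has the same $X^{[s]}$-portrait as $g$, hence the same action up to level $s+1$. For the step $n\to n+1$ with $n\ge s+1$, write $\phi := h_n^{-1}g\in\Stab(n)$; finding a correction $k\in H\cap\Stab(n)$ whose level-$n$ portrait matches that of $\phi$ yields the next approximation $h_{n+1}:=h_nk$. To produce $k$, proceed vertex-by-vertex: for each $v$ at level $n-s$, the sections $h_n|_v$ and $g|_v$ have their $X^{[s]}$-portraits in $\mathcal{P}$, and these portraits agree on $X^{[s-1]}$ since $h_n$ and $g$ agree on $X^{[n]}$. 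Picking $h^*\in H$ that realizes the $X^{[s]}$-portrait of $g|_v$, the element $k_v := (h_n|_v)^{-1}h^*$ lies in $H\cap\Stab(s)=K$ and carries precisely the level-$s$ portrait needed to match $\phi|_v$. The iterated regular branching embedding $K^{X^{n-s}}\hookrightarrow K$ then assembles the $(k_v)_v$ into a single $k\in K\subseteq H\cap\Stab(n)$ with the required level-$n$ portrait.

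For (ii)$\Rightarrow$(i), take $H=G$; since $G$ is closed as a finitely constrained group, $\overline{H}=G$ automatically, and the level-transitivity hypothesis is inherited. Self-similarity is immediate: for any $g\in G$ and any vertex $v$, every window of $g|_v$ is a window of $g$ translated, hence still in $\mathcal{P}$. For the regular branching property, given $(k_1,\dots,k_{|X|})\in\Stab_G(s)^X$, form the element $k$ with trivial root portrait that acts as $k_i$ on the $i$-th maximal subtree. Since each $k_i\in\Stab(s)$ has trivial portrait at levels $0,\dots,s-1$, the resulting $k$ has trivial portrait on $X^{[s]}$, so the window of $k$ at the root is the trivial pattern (in $\mathcal{P}$, as $\mathrm{id}\in G$), while the window of $k$ at any non-root vertex $iv'$ coincides with the window of $k_i\in G$ at $v'$ and thus lies in $\mathcal{P}$. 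Hence $k\in G\cap\Stab(s+1)\subseteq\Stab_G(s)$, giving the diagonal embedding $\Stab_G(s)^X\hookrightarrow\Stab_G(s)$.

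The main obstacle is the inductive step in direction (i)$\Rightarrow$(ii): producing the correction element $k$ with the prescribed level-$n$ portrait. The delicate point is identifying the required local correction as an element of $K$, which rests on the equality $H\cap\Stab(s)=K$ together with careful tracking of portrait composition under multiplication and restriction; once this is in place, the regular branching structure lets one globally assemble the pointwise corrections into a single element of $H$. The remaining work in both directions is comparatively routine once this step is in hand.
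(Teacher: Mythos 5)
Your proposal follows essentially the same route as the paper, which proves this statement via the more general Theorem~\ref{t:appendix} in the Appendix: the easy direction is the observation that self-similarity forces every window of an element of $H$ to be an allowed pattern, and the hard direction is a successive-approximation argument in which a configuration avoiding the forbidden patterns is matched level by level by elements of $H$, the local corrections being assembled into a single group element by means of the branching subgroup. The only substantive difference is organizational: the paper recurses on the first-level sections and applies the inductive hypothesis to them, so that only the one-step embedding $K^X\hookrightarrow K$ is ever invoked, whereas you descend directly to the critical depth and use the iterated embedding $K^{X^m}\hookrightarrow K$; both work. There is, however, one indexing slip to repair in your inductive step: to improve agreement from $X^{[n]}$ (levels $0$ through $n$) to $X^{[n+1]}$, the windows must be compared at vertices of level $n+1-s$, not $n-s$. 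At a vertex $v$ of level $n-s$ the windows of $h_n$ and $g$ already coincide (the sections agree on all of $X^{[s]}$, not merely on $X^{[s-1]}$ as you state), so your $k_v$ lies in $\Triv(s+1)$ with uncontrolled labels at level $s+1$, and the assembled correction $k$ fails to pin down level $n+1$ of $h_{n+1}$ --- the approximation does not advance. Shifting the depth by one fixes this: the sections then agree exactly on $X^{[s-1]}$, the element $k_v$ lies in $\Stab_H(s)$ and matches $\phi_v$ at level $s$, and the induction proceeds as you intend. The remainder of your argument, including the (ii)$\Rightarrow$(i) direction, is correct.
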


The direction (i) $\rightarrow$ (ii) was proven by the second author in ~\cite{HausdorffDim}, and the direction (ii) $\rightarrow$ (i) was shown earlier by Grigorchuk in ~\cite{Problems}. In ~\cite{PatternClosure}, the second author strengthened the result (ii) $\rightarrow$ (i) by proving that the group $H$ may be chosen to be countable. The proofs of these results are easily adapted to a more general definition of self-similar groups which we consider here; for the sake of completeness, we include a proof of the more general theorem in an appendix. 

Bondarenko and Samoilovych provided criteria to establish that a finitely constrained group is topologically finitely generated~\cite{Bondarenko}. In ~\cite{PenlandSunicHausDim}, the present authors applied these criteria to show that finitely constrained groups having a certain Hausdorff dimension were not topologically finitely generated. Additionally, the results of Fern{\' a}ndez-Alcober and Zugadi-Reizabal in~\cite{GGSFinitelyConstrained} show that if $G$ is a GGS-group acting on a $p$-regular tree where $p$ is an odd prime, and if $G$ has a non-constant defining vector, then $G$ is finitely constrained (this follows from combining the results of  Lemma 3.4 of ~\cite{GGSFinitelyConstrained} and Theorem~\ref{t:regular-branch}).   

The connection between group theory and subshifts in the one-dimensional case deserves mention as well.  If the finite alphabet $A$ is a group, then the one-sided, one-dimensional full shift over the alphabet $A$ gains a natural group structure isomorphic to the direct product $\displaystyle\prod_{n \in \mathbb{N}} A$ with elementwise group multiplication: $(gh)_n = g_nh_n$ for any $g, h \in A^{\mathbb{N}}$. An argument given by Kitchens in ~\cite{Kitchens87} for two-sided shifts over $\mathbb{Z}$ is easily adapted to one-sided shifts over $\mathbb{N}$ to give the following theorem. 

\begin{theorem}[Kitchens, 1987]\label{t:all-groups-shifts} 
Let $A$ be a finite alphabet which is also a group. If $G$ is a subset of $A^{\mathbb{N}}$ which is both a subshift and a subgroup of $A^{\mathbb{N}}$, then $G$ is a shift of finite type. 
\end{theorem}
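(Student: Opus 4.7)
My plan is to follow the classical group-shift approach, extracting from $G$ a decreasing chain of subgroups of $A$ that stabilizes because $A$ is finite, and then exploiting pointwise multiplication in $G$ to show that membership in $G$ is detected by a window of bounded size.

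For each $n \geq 0$ I would define
\[
H_n = \{a \in A : \exists\, g \in G,\ g_0 = g_1 = \cdots = g_{n-1} = 1_A\ \text{and}\ g_n = a\}.
\]
The set $\{g \in G : g_0 = \cdots = g_{n-1} = 1_A\}$ is the intersection of $G$ with the kernels of the first $n$ coordinate projections, hence a subgroup of $G$, and $H_n$ is its image under the projection onto the $n$-th coordinate, so $H_n \leq A$. Shift-invariance yields $H_{n+1} \subseteq H_n$: if $g \in G$ witnesses $a \in H_{n+1}$, then $\sigma(g) \in G$ witnesses $a \in H_n$. Since $A$ is finite, the chain $H_0 \geq H_1 \geq \cdots$ stabilizes at some index $N$, giving $H_n = H_N$ for all $n \geq N$.

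Let $L_{N+1}$ be the set of length-$(N+1)$ factors of elements of $G$ (equivalently, by one-sided shift-invariance, length-$(N+1)$ prefixes), and define the shift of finite type
\[
G' = \{w \in A^{\mathbb{N}} : w_i w_{i+1} \cdots w_{i+N} \in L_{N+1}\ \text{for every}\ i \geq 0\}.
\]
Clearly $G \subseteq G'$. The main step, which I expect to be the hardest part and which is precisely where the group structure of $G$ comes in, is the following extension lemma: if $n \geq N$, if $u_0 u_1 \cdots u_{n-1}$ is the prefix of some element of $G$, and if $u_{n-N} u_{n-N+1} \cdots u_{n-1} a$ lies in $L_{N+1}$, then $u_0 u_1 \cdots u_{n-1} a$ is also the prefix of some element of $G$.

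To prove the extension lemma I would pick $v \in G$ with prefix $u_0 \cdots u_{n-1}$, set $b = v_n$, and pick $z \in G$ with $z_0 z_1 \cdots z_N = u_{n-N} \cdots u_{n-1} a$. Then $\sigma^{n-N}(v)^{-1} \cdot z$ lies in $G$, its first $N$ coordinates equal $1_A$, and its $N$-th coordinate equals $v_n^{-1} a = b^{-1} a$, so $b^{-1} a \in H_N = H_n$. Thus there exists $y \in G$ with $y_0 = \cdots = y_{n-1} = 1_A$ and $y_n = b^{-1} a$, and $v \cdot y \in G$ has prefix $u_0 \cdots u_{n-1}$ followed by $b \cdot b^{-1} a = a$, as required. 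Given the extension lemma, starting from the length-$(N+1)$ prefix of any $w \in G'$ and iterating produces arbitrarily long prefixes of $w$ in the language of $G$, so $w$ is a limit of elements of $G$; since $G$ is closed, $w \in G$, and $G = G'$ is a shift of finite type.
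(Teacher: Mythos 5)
Your proof is correct. Note that the paper does not actually reproduce a proof of this theorem; it only cites Kitchens and remarks that his two-sided argument adapts to the one-sided setting, so there is no in-paper proof to compare against line by line. What you have written is precisely that classical adaptation: the non-increasing chain $H_0 \supseteq H_1 \supseteq \cdots$ of subgroups of the finite group $A$ must stabilize at some $N$, and the stabilized $H_N$ lets you correct the $n$-th coordinate of an approximating element of $G$ without disturbing the first $n$ coordinates, so that every configuration all of whose $(N+1)$-windows are allowed is a limit of elements of $G$. All the individual steps check out: $H_n$ is a subgroup as the homomorphic image of $G \cap \Triv(n)$ under the $n$-th coordinate projection, the inclusion $H_{n+1} \subseteq H_n$ follows from shift-invariance, the element $\sigma^{n-N}(v)^{-1}z$ does certify $b^{-1}a \in H_N = H_n$, and closedness of $G$ finishes the argument. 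It is worth observing that your construction is exactly the one-dimensional shadow of the machinery the paper builds for trees: your subgroup $\{g \in G : g_0 = \cdots = g_{n-1} = 1_A\}$ is $\Triv_G(n)$, the stabilization $H_{n+1} = H_n$ for $n \geq N$ is the statement that $G$ branches symbolically over $\Triv_G(N)$, and your extension lemma plus the limiting argument is the $(ii) \Rightarrow (i)$ induction of Theorem~\ref{t:appendix}. The point of the paper is that on a genuine tree the stabilization step fails without extra hypotheses (the odometer closure being the counterexample), which is why Theorem~\ref{t:main} needs the normalizer condition to move the ``branching'' elements $\delta_{u'}(g_u)$ to all positions on a level --- a step that is vacuous in your setting because each level of $\mathbb{N}$ has only one vertex.
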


\subsection*{Our results and organization of the paper} In this paper, we further investigate these connections between the computational, symbolic dynamics, and group-theoretic aspects of labeled trees. 

Section 2 contains the necessary details and background on the three areas outlined in the Introduction. For each action, not necessarily faithful, of a finite group $A$ on a finite set $X$, we consider a natural group structure on $A^{X^*}$ and its action on the regular rooted tree $X^*$, leading to a more general notion of a self-similar group. In this setting, self-similar groups are just subgroups of $A^{X^*}$ that are closed under the shift maps. Note that such a generalization is not just formally justifiable, but also necessary if one wishes to discuss classical one-sided one-dimensional group shifts in a common setting with all group tree shifts. Indeed, in the one-dimensional case, the tree is just a ray and the action of the group shift is necessarily trivial, while in higher dimensions the action could be faithful (as in the case of subgroups of $(\Sym(X))^{X^*}=\Aut(X^*)$). 

In Section 3, we present examples which show that various classes in the hierarchy of tree languages are in fact distinct, even when restricted to tree languages that are also subgroups of $A^{X^*}$. In particular, there are Rabin-recognizable (as tree languages) subgroups of $A^{X^*}$ that are not B\"{u}chi-recognizable and there are B\"{u}chi-recognizable subgroups of $A^{X^*}$ that are not sofic tree shifts. Further, we show that the closure of the odometer group is not finitely constrained, which shows that Theorem~\ref{t:all-groups-shifts} does not hold for shifts over arbitrary trees. However, we do not have an example of a self-similar group whose portraits form a sofic tree shift but do not form a tree shift of finite type. We further address the relationship between these two classes in Section 4. 

Our main result provides a sufficient condition for a self-similar group corresponding to a sofic tree shift to be a finitely constrained group.

\begin{theoremA}
Let $G$ be a subgroup of $A^{X^*}$. If the normalizer of $G$ in $A^{X^*}$ contains a self-replicating, level-transitive subgroup, then $G$ is a sofic tree shift group if and only if $G$ is a finitely constrained group.  
\end{theoremA}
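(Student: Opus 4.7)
The first direction (finitely constrained implies sofic) is immediate since every tree shift of finite type is sofic. For the nontrivial direction, suppose $G \le A^{X^*}$ is a sofic tree shift and let $N \le A^{X^*}$ be a self-replicating, level-transitive subgroup normalizing $G$. Because $G$, being a tree shift, is topologically closed, it equals its own closure, so the plan is to apply Theorem~\ref{t:regular-branch} by verifying that $G$ itself is a self-similar, regular branch group, branching over its level-$s$ stabilizer for some $s \ge 0$. Level-transitivity of $G$ will follow for free from that of $N$ together with the normalizer condition.

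I would proceed in three main steps. First, unpack the sofic hypothesis: this gives a finite bound $M$ on the number of distinct follower sets $F(p) = \{g \in G : g \text{ extends } p\}$ as $p$ ranges over labelings of finite initial subtrees. Second, use the group structure of $G$: each nonempty follower set is a coset of the corresponding stabilizer-like subgroup, so one really has only finitely many ``types'' of follower set, each measured by an underlying subgroup of $G$. Third, exploit the normalizer: conjugation by elements of $N$ permutes follower sets in a manner compatible with the tree geometry, self-replication of $N$ produces arbitrary sections on demand, and level-transitivity transports these between any two vertices at the same level, forcing the follower-set structure to be uniform across the tree at each level. Combining these three observations, one concludes that there exists $s \ge 0$ such that the direct product of the rigid stabilizers of the depth-$s$ subtrees of $X^*$ is contained in $G$; that is, $G$ is regular branch over its level-$s$ stabilizer. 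Theorem~\ref{t:regular-branch} then finishes the argument and even pinpoints the forbidden-pattern size as $s+1$.

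The hard part is the final combination: passing rigorously from (i) finitely many follower-set types and (ii) homogeneity of the normalizer action to (iii) an actual uniform depth $s$ together with a direct-product embedding of level-$s$ rigid stabilizers into $G$. Soficness alone only provides the weaker ``sofic memory'' (finitely many possible futures given any past), and one must use the group structure together with the rich transitivity supplied by $N$ to upgrade this to ``finite-type memory'' (the future determined by a bounded-depth past). The delicate point is to verify that the finitely many follower subgroups actually collapse onto a single rigid stabilizer acting independently on each depth-$s$ subtree, rather than producing only a weakly branched structure; this is precisely where both the group structure of $G$ and the combination of self-replication with level-transitivity in $N$ are indispensable.
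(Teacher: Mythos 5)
Your high-level plan --- establish a branching structure for $G$ over some level and then invoke the characterization of finitely constrained groups --- is exactly the paper's strategy, but the proposal has two genuine gaps. The first is that the core mechanism is missing: you correctly observe that soficness gives only ``finitely many futures'' and that this must be upgraded to a branching structure, and you yourself flag this upgrade as ``the hard part,'' but you do not supply it. The paper's engine here is a pumping argument via the Grafting Lemma (Lemma~\ref{l:grafting}): for $g \in \Triv_G(2N)$, where $N$ is the number of states of an accepting unrestricted Rabin automaton, the pigeonhole principle produces a repeated state $\alpha_g(u)=\alpha_g(v)$ along each root path; one then shows there is an accepting run of the \emph{identity} that agrees with $\alpha_g$ on $X^{[k(g)]}$, and grafting $g_u$ onto the identity at $v$ yields $\delta_v(g_u) \in \Triv_G(2N+|v|-|u|)$. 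Only after that does the normalizer hypothesis enter (Proposition~\ref{p:movement}), to transport these deep, singly-supported elements to arbitrary positions on their level, and finally $\delta_x(g)$ is reassembled as the product $\prod_{c\in C_g}\delta_{xc}(g_c)$ over an antichain of cut points. Without some version of this pumping step, ``finitely many follower-set types'' plus homogeneity of the normalizer action does not by itself produce elements of $G$ supported on a single subtree, which is what any branching conclusion requires.

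The second gap is the claim that ``level-transitivity of $G$ will follow for free'' from the normalizer condition: this is false (the trivial subgroup is normalized by everything and is not level-transitive), and indeed Theorem A imposes no level-transitivity on $G$. Consequently you cannot route the conclusion through Theorem~\ref{t:regular-branch}, which requires a level-transitive group of tree automorphisms branching over a level stabilizer; in the general (possibly non-faithful) setting of $A^{X^*}$ the correct target is the symbolic branching condition over $\Triv_G(s)$ and the correct closing step is Theorem~\ref{t:appendix}. Note also that $\Stab_G(n)$ and $\Triv_G(n)$ differ when the action of $A$ on $X$ is not faithful, so ``rigid stabilizers of depth-$s$ subtrees'' is not the right object in this generality.
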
 

Note that in the one-dimensional case our sufficient condition is always satisfied by the trivial subgroup and, therefore, in that case the conclusion is valid without stating any conditions, which is in agreement with the original Kitchens Theorem (Theorem~\ref{t:all-groups-shifts}). 

We also note that the condition itself is of group-theoretic character, while the conclusion tells us something about the symbolic dynamics on the corresponding tree shift space. Such a deduction from one settings to another is, among other things, made possible by a more general version of Theorem~\ref{t:regular-branch} presented in the Appendix. Indeed, both Theorem~\ref{t:regular-branch} and its generalization Theorem~\ref{t:appendix} provide a group-theoretic description, a description using the subgroup structure, of the symbolic dynamics notion of a group tree shift of finite type. 

An even more interesting deduction from one setting to another occurs in our last result. 

\begin{theoremB}
Let $G$ be a self-similar, level-transitive group of tree automorphisms such that its normalizer in $\Aut(X^*)$ contains a self-replicating, level-transitive subgroup. If $G$ satisfies an algebraic law, or $G$ has a nontrivial center, then the topological closure $\overline{G}$ is not Rabin-recognizable. 
\end{theoremB}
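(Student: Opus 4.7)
The plan is to argue by contradiction: suppose $\overline{G}$ is Rabin-recognizable, and combine this with the algebraic hypothesis (a nontrivial law or a nontrivial center) to reach an absurdity. The overall strategy is to use the Rabin assumption to upgrade $\overline{G}$ to a finitely constrained group via Theorem~A, and then exploit the regular branch structure furnished by Theorem~\ref{t:regular-branch} to contradict the algebraic hypothesis on $G$.

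For the first step, $\overline{G}$ is a closed, shift-invariant subgroup of $\Aut(X^*) = \Sym(X)^{X^*}$, and the normalizer hypothesis on $G$ is inherited by $\overline{G}$. I would first establish a key lemma: under the stated normalizer hypothesis, any Rabin-recognizable closed, shift-invariant subgroup of $\Aut(X^*)$ is sofic. A natural line of attack is to use the self-replicating, level-transitive subgroup in the normalizer of $\overline{G}$ to ``symmetrize'' a Rabin tree automaton accepting $\overline{G}$: conjugation by normalizing elements fixes $\overline{G}$ setwise while acting level-transitively on the underlying tree, so one can quotient out redundant states and, together with Rabin's complementation/determinization machinery, collapse the acceptance condition down to a sofic one. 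Once this reduction is in hand, Theorem~A yields that $\overline{G}$ is finitely constrained, and Theorem~\ref{t:regular-branch} then gives $\overline{G} = \overline{H}$ for some self-similar, regular branch group $H$ branching over $K := \Stab_H(s)$.

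The second step derives a contradiction from the algebraic hypotheses. If $G$ satisfies a nontrivial law $w$, then by continuity of the group operations so does $\overline{G}$, and hence so does $H \leq \overline{G}$. But no self-similar, regular branch group can satisfy a nontrivial law: branching over $K$ embeds $K \times \cdots \times K$ (one factor per level-$s$ vertex) into $H$, and iterating this through successive levels produces arbitrarily large direct powers of an infinite section of $H$ inside $H$, which is incompatible with any fixed identity (this is essentially Abert's theorem on branch groups). If instead $Z(G) \neq 1$, then by continuity of conjugation any central element of $G$ remains central in $\overline{G}$, so $\overline{G}$ has a nontrivial center; but the closure of a level-transitive, self-similar, regular branch group has trivial center, since any putative central element would have to commute with the rigid stabilizers at every level, and these contain copies of $K$ acting nontrivially on disjoint subtrees, which produces a noncommuting pair.

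The main obstacle is the first step: reducing Rabin-recognizability to soficity for closed, shift-invariant subgroups under the normalizer hypothesis. This is where the fine structure of tree automata meets the algebraic structure most intimately, and the argument presumably requires the full strength of the normalizer assumption --- note that without such a hypothesis the analogue of Kitchens' theorem already fails for tree shifts (as illustrated by the odometer example in Section~3), so one cannot expect a Rabin-to-sofic reduction to hold for arbitrary closed group tree shifts.
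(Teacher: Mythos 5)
Your overall skeleton coincides with the paper's: assume $\overline{G}$ is Rabin-recognizable, upgrade to sofic, then to finitely constrained via Theorem A, and finally use the regular branch structure from Theorem~\ref{t:regular-branch} together with Ab\'ert's theorem (branch groups satisfy no law) and the triviality of the center of a regular branch group to contradict the algebraic hypothesis. Your second step is essentially Proposition~\ref{p:algebraic-law-not-finitely-constrained} and is fine, modulo one caveat: your parenthetical justification of Ab\'ert's theorem (``arbitrarily large direct powers of an infinite section are incompatible with any fixed identity'') is not a proof --- arbitrary direct powers of a group satisfy exactly the same laws as the group itself --- so you are really just citing Ab\'ert, as the paper does.

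The genuine gap is in your first step, and it is twofold. First, your ``key lemma'' is never proved; what you offer is a speculative program (``symmetrize'' a Rabin automaton by conjugation, ``quotient out redundant states,'' invoke ``complementation/determinization machinery''). Conjugation by normalizer elements acts on configurations, not on the states of a given automaton, so it is unclear the symmetrization even makes sense, and nothing in the sketch explains how the acceptance condition --- a constraint on the set of infinitely recurring states along each ray --- actually disappears. Second, and more importantly, you have misplaced where the normalizer hypothesis does its work. The reduction from Rabin-recognizable to sofic needs no group structure and no normalizer hypothesis at all: for a language that is already closed and shift-invariant, the initial states and acceptance condition of a Rabin automaton are superfluous, so the underlying unrestricted Rabin automaton accepts the same set; this is precisely the general fact the paper cites from \cite{SoficTreeShiftCA} (Theorem 1.7 and Section 9). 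The normalizer hypothesis is consumed entirely in the next step, sofic $\Rightarrow$ finitely constrained (Theorem~\ref{t:main}). Your closing heuristic is also off: the odometer example shows that a closed group tree shift need not be of finite type (and, via Theorem~\ref{t:main}, that $\overline{\mathcal{O}}$ is not sofic); it does not show that the Rabin-to-sofic reduction fails for arbitrary closed group tree shifts --- on the contrary, the general reduction implies $\overline{\mathcal{O}}$ is not Rabin-recognizable either. So the step you identify as the main obstacle is the easy, citable one, while the step you treat as routine is where the paper's branching machinery is actually needed.
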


The premise is, again, of group-theoretic character, but the conclusion concerns the (im)possibility of the description of the topological closure by using the second order theory of $k$ successors $SkS$ (for more details on second order theory of multiple successors see Rabin's work~\cite{RabinAutomata}, the survey~\cite{InfiniteAutomata}, or the book~\cite{NerodeAutomata}). While no notions from symbolic dynamics appear in either the premise or the conclusion, the connection between the group theory and logic is established in the proof through symbolic dynamics considerations. 

It should be noted that, referencing an earlier version of this work, Grigorchuk and Kravchenko showed that the portraits of the Lamplighter group do not form a sofic tree shift(~\cite{Grigorchuk-Lamplighter-Lattice}). 

\subsection*{Acknowledgements} The authors would like to thank David Carroll and Rostislav Grigorchuk for their comments on earlier versions of this work.

\section{Background}

\subsection{Tree Shifts and Unrestricted Rabin Automata} In this section, we review background related to symbolic dynamics and computation on labeled trees, giving rigorous definitions of some notions from the Introduction.  Much of the material in this section may be found in ~\cite{SoficTreeShiftCA} or ~\cite{InfiniteAutomata}. In some cases we will utilize the notation of ~\cite{KitchensBook} or ~\cite{CAGroups}. 

Let $X$ be a non-empty finite set. For $n \geq 0$, we define $X^n$ as the set of all words of length $n$ in $X$, writing $|w| = n$ to indicate $w \in X^n$. We let
 \[
  X^{(n)} = \bigcup_{i = 0}^{n-1} X^i , \qquad X^{[n]} = \bigcup_{i=0}^{n}  X^i \quad \text{ and } \quad X^* = \bigcup_{i = 0}^{\infty} X^i.
\] 

As we have previously seen, the elements of $X^*$ can be identified with the vertices of a regular rooted $|X|$-ary tree where the empty word $\epsilon$ is the {\em root}, and each vertex $w \in X^*$ has the set $\{ wx \}_{x \in X}$ as its {\em children}. The set $X^n$ is called \textit{level n} of the tree $X^*$, and the set $wX^* = \{ wu \; \mid \; u \in X^* \}$ is the \textit{subtree rooted at w}. A \textit{ray} $r$ in $X^*$ is a subset of $X^*$ that forms an infinite directed path beginning at $\epsilon$. 

Given a finite alphabet $A$, a \emph{configuration} is a map $f: X^* \rightarrow A$. The image of $w \in X^*$ under $f$ is called the \textit{label of f at w} and is denoted by $f_{(w)}$. The set $A^{X^*}$ of all configurations is called the \emph{full tree shift}. 

For $w \in X^*$, the \textit{shift map at w} is denoted $\sigma_{w} : A^{X^*} \rightarrow A^{X^*}$ and is defined by $(\sigma_w(f))_{(u)} = f_{(wu)}$. We abbreviate $\sigma_w(f)$ as $f_w$. (One must be careful not to confuse $f_w$ and $f_{(w)}$.) A subset $S$ of $A^{X^*}$ is called \textit{shift-invariant} if $\sigma_w(S) \subseteq S$ for all $w \in X^*$.

The set $A^{X^*}$ can be viewed as a compact metric space. Many alternative (but topologically equivalent) metrics are in use; a common choice is the metric $d$ given by $d(f,g) = 0$ if $f=g$, and 
\[ 
d(f,g) =  \frac{1}{2^{n}}, \text{where } n = \inf \{ \ k \in \mathbb{N} \; \mid \;  f _{\rvert X^{[k]}} \neq g_{\rvert X^{[k]}} \} 
 \] 
if $f \neq g$. The set of all rays forms a compact metric space called \textit{the boundary of $X^*$}, denoted $\partial X^*$. 

\begin{definition} A subset of a full tree shift which is topologically closed and shift-invariant is called a \emph{tree shift}.
\end{definition}

A \emph{pattern} is a map $p: M \rightarrow A$, where $M$ is some finite, non-empty subset of $X^*$. If the domain of a pattern $p$ is $X^{(n)}$ for some  $n$, we say that $p$ is a \textit{block of size n}. Given a configuration $f$ in $A^{X^*}$, a pattern $p$ with domain $M$, and some $w \in X^*$, we say that \textit{p appears in f at w} if $f_{w}\rvert _{M} = p$. Given a set $\mathcal{F}$ of \textit{forbidden patterns}, the set
 \[
 \mathcal{X}_{\mathcal{F}} = \{ f \in A^{X^*} \; \mid \;  \text{for all } p \in \mathcal{F},\; p \text{ does not appear in } f \} 
 \]
 defines a tree shift. Conversely, any tree shift $\mathcal{X}$ can be defined by declaring the forbidden patterns to be those that do not appear in any configuration of $\mathcal{X}$. Different sets of forbidden patterns may define the same tree shift. In the case that there exists a \textit{finite} set of forbidden patterns $\mathcal{F}$ such that $\mathcal{X} = \mathcal{X}_{\mathcal{F}}$, we say that $\mathcal{X}$ is a \textit{tree shift of finite type}. By extending patterns as necessary, we may assume that the forbidden patterns are all blocks of the same size. 

When $|X| = 1$, the shift $A^{X^*}$ may be viewed as the one-sided full shift over $\mathbb{N}$. In that case, a shift $\mathcal{X}$ is called \textit{sofic} if the blocks of $X$ form a regular language (accepted by some finite state automaton).  Analogously, we define sofic tree shifts to be those which can be accepted by a particular type of tree automaton. 

\begin{definition}
An \emph{unrestricted Rabin graph} is a 4-tuple $\mathcal{A} = (S,X,A, \mathcal{T})$ with $X$ and $A$ non-empty finite sets, $S$ a non-empty set, and $\mathcal{T}$ a subset of $S \times A \times S^{X}$. $X$ is called the \emph{tree alphabet}, $S$ is called the \emph{state set} or \emph{vertex set}, $A$ is called the \emph{label alphabet}, and $\mathcal{T}$ is called the \emph{set of transition bundles}.
\end{definition}

\begin{definition}
An \emph{unrestricted Rabin automaton} is an unrestricted Rabin graph with a finite state set. 
\end{definition}

To any configuration $f \in A^{X^*}$, we can associate an unrestricted Rabin graph $\mathcal{A}_f = (X^*,X,A,\mathcal{T}_f)$ with
 \[
 \mathcal{T}_{f} =  \{ (w; f_{(w)}; (wx)_{x \in X}) \; \mid \;  w \in X^* \}.
 \] 

Given two unrestricted Rabin graphs $\mathcal{A}_1 = (S_1,X,A,\mathcal{T}_1)$ and $\mathcal{A}_2 = (S_2, X, A, \mathcal{T}_2)$, a  \emph{homomorphism from $\mathcal{A}_1$ to $\mathcal{A}_2$} is a map $\alpha: S_1 \rightarrow S_2$ such that $(\alpha(s); a; (\alpha(s_x))_{x \in X}) \in \mathcal{T}_2$ whenever $(s; a; (s_x)_{x \in X}) \in \mathcal{T}_1$. We may also, in an overloading of notation, write the homomorphism $\alpha$ as $\alpha: \mathcal{A}_1 \rightarrow \mathcal{A}_2$. 

\begin{definition}
Let $\mathcal{A}$ be an unrestricted Rabin automaton. An element $f$ of $A^{X^*}$ is \emph{accepted} by $\mathcal{A}$ if there exists a homomorphism $\alpha_f: \mathcal{A}_f \rightarrow \mathcal{A}$. The \textit{language} $\mathcal{A}$ is the set of all configurations accepted by $\mathcal{A}$. A tree shift $\mathcal{X}$ is called \textit{sofic} if there exists an unrestricted Rabin automaton $\mathcal{A}$ such that $\mathcal{X} = \mathcal{X}_{\mathcal{A}}$.
\end{definition}

The class of sofic tree shifts is equal to the class of tree shifts which are the image of some tree shift of finite type under a continuous, shift-equivariant map called a \textit{cellular automaton} (see ~\cite[Theorem 1.7]{SoficTreeShiftCA} for the equivalence). From this it follows that the language of an unrestricted Rabin automaton $\mathcal{A}$ is a tree shift; we denote this shift by $\mathcal{X}_{\mathcal{A}}.$   Cellular automata are a very interesting and active area of study, but they will not enter into the rest of our discussion. 

\subsection{Other Classes of Tree Automata}

In this subsection, we introduce two additional classes of tree automata, \textit{B\"{u}chi automata} and \textit{Rabin automata}. They were initially introduced  (under different names) by Rabin in ~\cite{BuchiAutomata} and ~\cite{RabinAutomata}, respectively. (Note that what we call B\"{u}chi automata are also called \textit{special automata}.)

\begin{definition}
Let $A$ be a finite alphabet and $X$ be a finite, non-empty set. A B\"{u}chi automaton $\mathcal{B}$ (over $X$ with alphabet set $A$) is a 6-tuple $\mathcal{B} = (S,X,A,\mathcal{T},\mathcal{I},\mathcal{F})$ where $(S,X,A,\mathcal{T})$ is an unrestricted Rabin automaton, $\mathcal{I}$ is a non-empty subset of $S$ (called the \textit{set of initial states}) and $\mathcal{F} \subseteq S$ (called the set of \textit{accepting states}). 
\end{definition}

\begin{definition}
Let $A$ be a finite alphabet and $X$ be a finite, non-empty set. A Rabin automaton $\mathcal{R}$ (over $X$ with alphabet set $A$) is a 6-tuple $\mathcal{R} = (S,X,A,\mathcal{T},\mathcal{I}, \mathcal{F})$, where $(S,X,A,\mathcal{T})$ is an unrestricted Rabin automaton, $\mathcal{I}$  is a non-empty subset of $S$ (called the \textit{set of initial states}) and $\mathcal{F}$ (the \textit{set of accepting sets}) is a collection of subsets of $S$. 
\end{definition}

Acceptance in these classes of automata is based on the notion of a \textit{successful run}.

\begin{definition}
Let $f \in A^{X^*}$ be a configuration and $\mathcal{A} = (S,X,A,\mathcal{T},\mathcal{I}, \mathcal{F})$ be either a B\"{u}chi or Rabin automaton over $X$ with alphabet $A$. A \textit{run} of $\mathcal{A}$ on $f$ is a map $r:X^* \rightarrow S$ such that \begin{itemize}
\item $r$ is a homomorphism from the unrestricted Rabin automaton $\mathcal{A}_f$ to the unrestricted Rabin automaton $(S,X,A,\mathcal{T})$
\item $r(\epsilon) \in \mathcal{I}$
\end{itemize}
\end{definition}

For a configuration $f \in A^{X^*}$, a ray $\pi \in \partial X^*$, and a run $r$ of either a B\"{u}chi or Rabin automaton, let
$r_{\infty}(\pi,f) = \{ s \in S \; \mid \;  r^{-1}(s) \cap \pi  \text{ is infinite } \}$. A run $r$ of a B\"{u}chi automaton $\mathcal{B}$ on a configuration $f$ is called \textit{successful} if $ r_{\infty}(\pi,f) \cap \mathcal{F} \neq \emptyset
$,  for all $\pi \in \partial X^*$. For a Rabin automaton $\mathcal{R}$, a run $r$ is successful if for all $\pi \in \partial X^*$, there exists $F \in \mathcal{F}$ (which may depend on $\pi$) such that
$r_{\infty}(\pi,f) = F$. A configuration $f$ is \textit{accepted} by a Rabin (or B\"{u}chi) automaton $\mathcal{A}$ if there exists a successful run $r$ of $\mathcal{A}$ on $f$. 

For a Rabin (B\"{u}chi) automata $\mathcal{A}$, the \textit{language of} $\mathcal{A}$ is written as $\mathcal{L}(A)$ and defined as \[ \mathcal{L}(\mathcal{A}) = \{f \in A^{X^*} \; \mid \;  \text{ there exists a successful run } r \text{ of } \mathcal{A} \text{ on } f \}.\] A set $W \subseteq A^{X^*}$ is Rabin (B\"{u}chi) recognizable if there exists a Rabin (B\"{u}chi) automaton $\mathcal{A}$ such that $W = \mathcal{L}(\mathcal{A})$. 

It is a standard exercise to show that both Rabin and B\"{u}chi languages are closed under taking finite unions, finite intersections, and projections. It is well-known, but much more challenging to prove, that Rabin languages are closed under taking complements, while B\"{u}chi languages are not. 

Of course, a  B\"{u}chi tree automaton $\mathcal{B} = (S,X,A,\mathcal{T},\mathcal{I},\mathcal{F})$  may be seen as a Rabin tree automaton~$\mathcal{R} = (S,X,A,\mathcal{T},\mathcal{I}, \mathcal{F}')$  where $\mathcal{F}' = \{ F \subseteq S \; \mid \; F \cap \mathcal{F} \neq \emptyset \}$, so any B\"{u}chi recognizable set is Rabin recognizable. Similarly, any sofic shift is B\"{u}chi recognizable, since we may consider an unrestricted Rabin automaton as a B\"{u}chi automaton by taking all states to be both initial and final (initial and final states are not needed to accept a closed, shift-invariant set -- see ~\cite[Section 9]{SoficTreeShiftCA} for more details). Also, any tree shift of finite type is a sofic tree shift (see ~\cite[Theorem 1.9]{SoficTreeShiftCA}). 

\subsection{Group Structure}\label{ss:group-structure}

When $A$ is a finite group which acts on a set $X$, the elements in the infinite iterated wreath product $A \wr_X A \wr_X \ldots$ correspond to labeled trees in the full tree shift $A^{X^*}$. In this subsection, we give the details of this construction and discuss background from the case of tree automorphisms. 
The construction we give here is a generalization of {\em self-similar groups of tree automorphisms.} In that case, the alphabet is the group $\Sym(X)$, the full symmetric group on $X$, with its standard faithful action on $X$. Here, the infinitely iterated permutational wreath product
 \[ 
 \Sym(X) \wr_X \Sym(X) \wr_X \Sym(X)\wr  \ldots 
 \] yields a group of tree automorphisms of $X^*$. However, there is no reason why one must restrict the alphabet of tree shift to be a subgroup of $\Sym(X)$ -- there is a meaningful group structure on any tree subshift whose labels come from a finite group. . 

For any group $G$, we will write $e_G$ for the identity of $G$. 

Let $A$ be a group. Given a set $X$, a \textit{(left) group action} is a map $ A \times X \rightarrow X$, given by $(g,x) \rightarrow gx$ such that $ex= x$ and $g(hx) = (gh)x$ for all $g,h \in A$ and $x \in X$. Right actions are defined analogously. Given $x \in X$ and $g \in A$, we denote a left action of $g$ on $x$ by $g(x)$ and a right action of $g$ on $x$ by $x^g$.  If $B$ is a group, the set $B^{X} = \prod_{x \in X} B$ of all functions from $X$ to $B$ is a group with componentwise multiplication. The left action of $A$ on $X$ induces a right action of $A$ on $B^{X}$ given by 
\[
(b^{a})_{x} = b_{a(x)}. 
\] 

This action of $A$ on $B^{X}$ allows us to define the semi-direct product $A \ltimes B^{X}$, whose underlying set is $A \times B^{X}$ and where multiplication is given by
 \[ 
(a_1,b_1)(a_2,b_2) = (a_1a_2, b_1^{a_2}b_2) 
\] for $a_1,a_2 \in A$ and $b_1,b_2 \in B^{X}$. The group $A \ltimes B^{X}$ is called the \textit{permutational wreath product} of $(A,X)$ and $B$, and is denoted $A \wr_{X} B$.

In the case where $A,B$ are groups with left actions on the sets $X,Y$, respectively, the group $A \wr_{X} B$ has a left action on the set $X \times Y$, given by \[ (a,b)(x,y) = (ax,b_x{y}). \] 

If $X$ is a finite set and $A$ is a finite group with some left action $\phi$ on $X$, the permutational wreath product $A \wr_{X} A$ corresponds naturally to the set $A^{X^{(2)}}$ and acts on $X^2$ . The action of $A$ on $X$ naturally induces an action of $A$ on $(A \wr_{X} A)^{X}$, leading to the permutational wreath product \[ A \wr_{X} (A \wr_{X} A), \] which acts on $X^3$ and is identified with $A^{X^{(3)}}$. The groups $(A \wr_{X} A) \wr_{X} A$ and $A \wr_{X} (A \wr_{X} A)$ are canonically isomorphic, so we will omit parentheses. Iterating this construction $n$ times yields a group which acts on $X^{n}$. It is clear from the construction that the action of $\underbrace{A \wr A \wr \ldots \wr A}_{n \text{ times}}$ on $X^n$ preserves prefixes of words. Thus this group also has a well-defined action on $X^k$ for $0 \leq k \leq n$, which extends to an action on $X^{(n+1)}$. The infinitely iterated wreath product 
\[ 
A \wr_{X} A \wr_{X} A \ldots,
 \] 
naturally acts on $X^n$ for each $n \geq 1$, and thus it acts on $X^*$. The infinite iterated wreath product group whose construction we have just described will henceforth be denoted by $F(A,X,\phi)$ and called \textit{the full tree shift group} with alphabet $A$, action $\phi$ and space $X$. 

Note that \[ G = F(A,X,\phi) = A \wr_X A \wr_X A \ldots = A \wr_X (A \wr_X A \wr_X A \ldots) = A \wr_X G = A \ltimes G^X, \] and thus any element $g$ can be uniquely expressed as $(a, (g_x)_{x \in X})$, with $a \in A$ and $g_x \in G$.  For each $x \in X$, the element $g_x$ is the \textit{section of g at x}. We define $g_{\epsilon} = g$, and for a nontrivial word $w = vx \in X^*$ with $v \in X^*$ and $x \in X$, we recursively define $g_{vx} = (g_v)_x$.

Note that when writing the elements in the semi-direct product $A \ltimes G^{X}$, we often omit the outside parentheses and simply write the element as $a(g_x)_{x \in X}$, When $|X| = 2$, we write elements in the form $a(g_0,g_1)$.

\begin{definition}
A subgroup $H$ of $G = F(A,X,\phi)$ is \textit{self-similar} if $h_w \in H$ for all $h \in H$ and $w \in X^*$.
\end{definition}

There is a homomorphism $\alpha_{\epsilon}: F(A,X,\phi) \rightarrow A$ given, for $g = (a, (g_x)_{x \in X})$ by \[ \alpha_{\epsilon}(g) = a. \]  We call $\alpha_{\epsilon}(g)$ the \textit{root label of g}, and we define $\alpha_w: F(A,X,\phi) \rightarrow A$ by $\alpha_w(g) = \alpha_{\epsilon}(g_w)$. 

\begin{definition}
The \textit{portrait map} $\alpha: F(A, X, \phi) \rightarrow A^{X^*}$ is defined by \[ [\alpha(g)]_{(w)} = \alpha_w(g), \] and $\alpha(g)$ is called the \textit{portrait of g}. 
\end{definition}

The portrait map is a bijection, and whenever useful, we identify an element of $F(A,X,\phi)$ with its portrait in $A^{X^*}$. In particular, we endow $F(A,X,\phi)$ with a topology induced by its identification with $A^{X^*}$. This allows us to view subgroups of $F(A,X,\phi)$ as subspaces of $A^{X^*}$, and to study the properties of subgroups as spaces of portraits.  Often, if the action $\phi$ is understood or unimportant, we may simply write $F(A,X,\phi)$ as $A^{X^*}$. 

It is an important to note that the portrait of a section, $\alpha(g_v)$, corresponds to the shift of a portrait $\sigma_v(\alpha(g))$, so that a subgroup of $F(A,X,\phi)$ is self-similar if and only if its portraits form a shift-invariant subset of $A^{X^*}$. Thus, the portrait and section maps provide the correspondence between the group structure of the infinite iterated wreath product $F(A,X,\phi)$ and the structure of  $A^{X^*}$ as a shift space. \\

Again, it should be noted the self-similar groups we consider here are a generalization of the usual case of tree automorphisms. The following facts are well-known for self-similar groups of tree automorphisms, and also hold in the more general case discussed here. 

\begin{lemma}
Let $X$ be a finite set and $A$ be a finite group acting on $X$ via $\phi$. Let $g,h \in F(A,X,\phi)$, $x \in X$, and $u,v,w \in X^*$. 
Then the following hold.
\begin{enumerate}
\item $(gh)_w = g_{h(w)}h_w$
\item $(gh)_{(w)} = g_{(h(w))}h_{(w)}$
\item  $(g_{u})_{v} = g_{uv}$
\item $(g^{-1})_{(v)} = (g_{g^{-1}(v)})^{-1}$
\item Let $\supp(g) = \{ w \in X^* \; \mid \; g_{(w)} \neq e_A \}$. If $\supp(g) \subseteq wX^*$ and $\supp(h) \subseteq vX^*$ with $wX^* \cap vX^* = \emptyset$, then $gh = hg$. 
\end{enumerate}
\end{lemma}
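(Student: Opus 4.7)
The plan is to prove the five assertions in the order $(3),(1),(2),(4),(5)$, so that later parts may invoke earlier ones. Throughout I use the semidirect-product structure $F(A,X,\phi) = A \ltimes F(A,X,\phi)^X$, in which every element is written as $g = (a,(g_x)_{x\in X})$ with $a = g_{(\epsilon)}$, together with the recursive definition $g_{wx}=(g_w)_x$ that extends sections to all words of $X^*$.

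First, part (3) is a straightforward induction on $|v|$: the base case $v=\epsilon$ is $g_\epsilon=g$, and for $v=v'x$ we have $(g_u)_{v'x}=((g_u)_{v'})_x=(g_{uv'})_x=g_{uv'x}$, using the recursive definition twice and the inductive hypothesis once. Next, part (1) is proved by induction on $|w|$. For $w=\epsilon$ both sides collapse to $gh$. For $w=x\in X$, writing $g=(a,(g_y))$ and $h=(b,(h_y))$ and computing in the semidirect product yields $(gh)_x = g_{b(x)}h_x$, and since $h(x)=b(x)$ this gives $(gh)_x=g_{h(x)}h_x$. For $w=xw'$ with $|w'|\ge 1$, combine this base step with part (3) and the inductive hypothesis, invoking the standard identity $h(xw')=h(x)\cdot h_x(w')$ to match indices.

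Part (2) then follows from (1) by applying the root-label homomorphism $\alpha_\epsilon\colon F(A,X,\phi)\to A$ to both sides. Part (4) is obtained from (2) by taking $h=g^{-1}$: since $(gg^{-1})_{(v)}=e_A$, rearrangement gives $(g^{-1})_{(v)}=(g_{(g^{-1}(v))})^{-1}$, which matches $(g_{g^{-1}(v)})^{-1}$ once one notes that $\alpha_\epsilon$ carries inverses to inverses.

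The main obstacle is part (5), since it requires controlling how $g$ and $h$ permute vertices outside their supports. The key geometric observation is: if $\supp(g)\subseteq wX^*$, then the labels $g_{(u')}$ along any prefix path that never enters $wX^*$ vanish, so $g$ fixes every vertex $u$ not having $w$ as a prefix; moreover, $g_{(w')}=e_A$ for every proper prefix $w'$ of $w$, so $g$ preserves the prefix $w$ on every vertex already starting with $w$. The analogous facts hold for $h$ with respect to $v$. To conclude $gh=hg$, I use that the portrait map is injective and check $(gh)_{(u)}=(hg)_{(u)}$ for every $u\in X^*$; by (2) this reduces to verifying $g_{(h(u))}h_{(u)}=h_{(g(u))}g_{(u)}$. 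Splitting on whether $u$ has prefix $w$, prefix $v$, or neither—exhaustive and exclusive thanks to $wX^*\cap vX^*=\emptyset$—the observations above collapse both sides to $g_{(u)}$, $h_{(u)}$, or $e_A$ respectively, completing the argument.
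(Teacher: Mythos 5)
The paper offers no proof of this lemma --- it is stated as a collection of well-known facts carried over from the tree-automorphism setting --- so there is nothing to compare your argument against; judged on its own, your proof is correct. The inductions for (1) and (3), the passage to labels via $\alpha_\epsilon$ for (2), and the specialization $h=g^{-1}$ for (4) are all standard and sound (in (4) you rightly read the right-hand side as the inverted \emph{label} $\bigl(g_{(g^{-1}(v))}\bigr)^{-1}$, which is the only way the stated identity type-checks). For (5), the two geometric observations you isolate --- that $\supp(g)\subseteq wX^*$ forces $g$ to fix every vertex outside $wX^*$ (because the letters of $g(u)$ depend only on the labels at proper prefixes of $u$, none of which can lie in $wX^*$ when $u\notin wX^*$) and to preserve the prefix $w$ on $wX^*$ --- are exactly what is needed, and the three-way case split via part (2) and injectivity of the portrait map closes the argument; note that $wX^*\cap vX^*=\emptyset$ already excludes $w=\epsilon$ or $v=\epsilon$, so no degenerate case is missed.
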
 

A self-similar group which is a tree shift of finite type (under the portrait map $\alpha$) is called a \textit{finitely constrained group}, and a self-similar group which is also a sofic tree shift (under the portrait map $\alpha$) is called a \textit{sofic tree shift group}. The reader should be warned that the term \textit{sofic} is used differently in group theory, as in ~\cite{SoficGroups}. However, we use the word \textit{sofic} to describe the tree shift, as in the traditional sense of symbolic dynamics. 

For the rest of this section, we write $A^{X^*}$ for $F(A,X,\phi)$. For $u \in X^*$ and a subgroup $H \leq A^{X^*}$, we define the stabilizer of $u$ as 
\[ \Stab_H(u) = \{ f \in A^{X^*} \; \mid \;  f(u) = u \}. 
\] 
and \textit{the level n stabilizer} (for $n \geq 0$) is the subgroup defined as 
\[ 
Stab_{H}(n) =  \{ f \in A^{X^*} \; \mid \;  f(u) = u \text{ for all } u \in X^{n} \} = \bigcap_{u \in X^{n}} \Stab_{H}(u). 
\] 

The subgroup $\Triv(n)$ of $A^{X^*}$ is given by 
 \[ 
\Triv(n) = \{ f \in A^{X^*} \; \mid \; f_{(u)} = e_A \text{ whenever } |u| < n \},
 \]
 $\Triv(n)$ is always a normal subgroup of $A^{X^*}$. For a subgroup $H \leq A^{X^*}$, we define 
\[    
 \Triv_{H}(n) = \Triv(n) \cap H. 
\] 

The group $\Triv_{H}(n)$ is a normal subgroup of $H$. Note that $\Triv_{H}(n)$ is always a subgroup of $\Stab_{H}(n)$; if the action of $A$ on $X$ is faithful, then the two groups coincide. 

\begin{definition}
A subgroup $H$ of $\Aut(X^*)$ is a \textit{regular branch group over} $\Stab_{H}(n)$ if it is level-transitive and, for all $h_0, \ldots, h_{k-1} \in \Stab_H(n)$, the element $(h_0,h_1, \ldots, h_{k-1})$ is also an element of $\Stab_{H}(n)$. 
\end{definition}

\begin{definition}
A subgroup $H$ of $A^{X^*}$ is a \textit{symbolic branch group, symbolically branching over} $\Triv_{H}(n)$ if, for all $h_0, \ldots, h_{k-1} \in \Triv_H(n)$, the element $(h_0,h_1, \ldots, h_{k-1})$ is also an element of $\Triv_{H}(n)$. 
\end{definition}

In the case of a faithful, level-transitive action, a group $H$ is a regular branch group over $\Stab_H(n)$ precisely when it is a symbolic branch group, symbolically branching over $\Triv_{H}(n)$. This distinction is important because level-transitivity is necessary for many results on branch groups of tree automorphisms, but will not be necessary for our more general definition of finitely constrained groups. 

For each $u \in X^*$ and subgroup $H \leq A^{X^*}$, there is a homomorphism $\phi_u: \Stab_{H}(u) \rightarrow A^{X^*}$ given by $g \mapsto g_u$. A self-similar group $H$ is \textit{self-replicating} if the map $\phi_u$ is surjective onto $H$ for all $u \in X^{*}$. In other words, a subgroup $H$ of $A^{X^*}$ is self-replicating if it is self-similar and for any $u \in X^*$ and any $g \in H$, we can find $h \in \Stab_{H}(u)$ such that $h_u = g$. A subgroup $H \leq A^{X^*}$ is \textit{level-transitive} if its action on $X^*$ is transitive on each $X^n$.


\section{Tree Languages for Portraits of Self-Similar Groups}

In this section, we consider the portraits of self-similar groups as tree languages recognized by various classes of infinite tree automata. In general, the different classes of configuration subspaces discussed so far form a hierarchy as follows: 
\[ \text{SFT} \subsetneq \text{SOFIC} \subsetneq \text{B\"{U}CHI RECOGNIZABLE} \subsetneq \text{RABIN RECOGNIZABLE}. \]

Our goal is to explore this hierarchy for the portrait spaces of self-similar groups. Henceforth, whenever $A$ is a finite group which has a left action $\phi$ on a finite set $X$, we will write the iterated wreath product $F(A,X,\phi)$ as $A^{X^*}$ (suppressing reference to $\phi$).

We begin by showing that there are B\"{u}chi-recognizable tree shift groups which are not sofic tree shift groups. 

\begin{example}[A B\"{u}chi-recognizable self-similar group which is not a sofic tree shift group] Let $G = A^{X^*}$ be a full tree shift group for some finite set $X$, some finite group $A$, and some left action $\phi$. Let $B$ be a proper subgroup of $A$. We define the subset $H_B$ to be \[  H_B = \{ h \in A^{X^*} \mid \text{ there exists an }N_h \text { such that } |v| > N_h \text{ implies that } h_{(v)} \in B \}. \]  When $B$ is the trivial group and the action $\phi$ is faithful, this corresponds to the group of \textit{finitary tree automorphisms}(as discussed in ~\cite{Brunner-Sidki-1997}) . Note that $H_B$ is self-similar. We will show that $H_B$ is a subgroup of $G$. If $g, h \in H_B$, then there exist $N_1,N_2$ such that $g_{(v)} \in B$ whenever $|v| > N_1$, and  ${h^{-1}}_{(v)} \in B$ whenever $|v| > N_2$. Then, taking $N = \max\{N_1,N_2\}$, it follows that whenever $|v| > N$, we have 
\[ 
{gh^{-1}}_{(v)} = {g}_{\left(h^{-1}(v)\right)}({h}_{({h^{-1}(v)})})^{-1} \in B,
 \]
 as $|h^{-1}(v)| = |v|$. Also, it is clear that $H_B$ is self-similar and self-replicating. Moreover, if the action of $B$ on $X$ is transitive, then $H_B$ is level-transitive, as well. However, $H_B$ is not closed. In fact, $H_B$ is dense in $G$, since for any $g \in G$, we can obtain an element $h \in H_B$ which is arbitrary close to $g$ by by choosing the appropriate $n$ and setting $h_{(w)} = g_{(w)}$ for $w \in X^{[n]}$, $h_{w} = e_A$ if $|w| > n$ . Since $H_B$ is not topologically closed, it is not sofic. 

We will show that $H_B$ is B\"{u}chi. Consider the B\"{u}chi automaton $\mathcal{B} = (S,X,A,\mathcal{T},\mathcal{I},\mathcal{F})$, where \begin{itemize}
\item $S = \{ s_1, s_2 \} $
\item $\mathcal{T}$ consists of transition bundles of the following forms:
\subitem for all $a \in A$, $T_{1,a} = (s_1; a; {(s_1)}_{x \in X}) \in \mathcal{T}$;
\subitem for all $b \in B$, $T_{1,2,b}  = (s_1; b; {(s_{2})}_{x \in X}) \in \mathcal{T}$;
\subitem for all $b \in B$, $T_{2,b} = (s_2; b; {(s_{2})}_{x \in X}) \in \mathcal{T}$
\item $\mathcal{I} = \{ s _1 \}$
\item $\mathcal{F} = \{ s_2 \}$
\end{itemize}

The computation of $\mathcal{B}$ can be described as follows. The automaton begins in the initial state $s_1$. It can remain in $s_1$ by reading any element of $A$, or it can transition to $s_2$ by reading any element of $B$. Once in $s_2$, it will remain at $s_2$, at which point it can only read elements of $B$. Thus a configuration is accepted by $\mathcal{B}$ if there exists a run which eventually reaches $s_2$ along every ray and never leaves that state. We claim that the set of elements accepted by $\mathcal{B}$ is the same as the subgroup $H_B$.

If $h \in H_B$, with $N_h$ such that $h_{(v)} \in B$ whenever $|v| > N_h$, we can define a successful run $r_h: \mathcal{A}_h \rightarrow \mathcal{B}$ by 
\[
 r_h(v) = \begin{cases} s_1, &|v| \leq N_h \\ s_2, &|v| > N_h \end{cases}.
\] 
Thus $H_B \subset \mathcal{L}(\mathcal{B})$.

Now suppose $h \in \mathcal{L}(\mathcal{B})$. Let $r_h$ be a successful run of $h$ on $\mathcal{B}$. By the definition of B\"{u}chi acceptance, each ray $\pi$ must take the value $s_2$ infinitely often. However, every transition bundle in $\mathcal{B}$ which begins at $s_2$ also ends at $s_2$, so $r_h(w) = s_2$ implies that $r_h(wx) = s_2$ for all $x \in X$. It follows from induction that for every ray $\pi$, there is a $w \in \pi$ such that $r_h$ only takes the value $s_2$ on $wX^*$. Since each ray $\pi$ is contained in some such $wX^*$, and the sets $wX^*$ are open in the topology of $\partial X^*$, these sets form an open cover of $\partial X^*$. Since $\partial X^*$ is compact, we can take a finite collection $\{ w_1, w_2, \ldots, w_n \}$ such that each ray $\pi$ is contained in at least one open set from the finite collection $\{ w_iX^* \}_{i=1}^n$ and such that for each $v \in w_iX^*$, $r_h(v) = s_2$. Taking
 \[
N = \max \{ |w_1|, |w_2|, \ldots, |w_n| \}, 
\]
 we have that $r_h(v) = s_2$ whenever $|v| > N$. However, the only transition bundles from $s_2$ to itself are labeled by elements of $B$, so we have that $v_{(h)} \in B$ whenever $|v| > N$. Thus $h \in H_B$.
\end{example}

The next example utilizes the standard construction of a Rabin recognizable subset which is not B\"{u}chi recognizable. The key observation is that this tree language describes the portraits of a self-similar group.

\begin{example}[A Rabin-recognizable tree shift group which is not B\"{u}chi-recognizable]
Let $X = \{0,1\}$ and $A = C_2 = \{  \id, \sigma \} $ be the cyclic group of order 2 acting transitively on $X$. Let 
 \[
 H = \{ g \in  A^{X^*} \mid \text{ every ray in } g \text{ has only finitely many vertices with nontrivial label}  \}.
\] 

The set $H$ is a well-known example of a tree language which is Rabin but not B\"{u}chi (see ~\cite{InfiniteAutomata}). It is clear that $H$ is self-similar. 

Notice that $H$ is a subgroup of $A^{X^*}$, since an element $h \in A^{X^*}$ is in $H$ if and only if for every ray $\pi$, there exists an $N$ such that for all $v \in \pi$ with $|v| > N$, $h_{(v)} = \id$.  Let $g,h \in H$, and let $\pi$ be a ray in $gh^{-1}$. Since $g \in H$ and $h^{-1}(\pi)$ is a ray in $X^*$, there exist an $N_1$ such that $(g)_{(h^{-1}(v))}$ is the identity whenever $|h^{-1}(v)| = |v| > N_1$. Since $h \in H$, there exists an $N_2$ such that $h_{(v)}$ is the identity whenever $|v| > N_2$. Taking $N = \max \{ \ N_1,N_2 \ \}$, it follows that whenever $|v| > N$  \[ 
{gh^{-1}}_{(v)} = {g}_{\left(h^{-1}(v)\right)}({h}_{({h^{-1}(v)})})^{-1} = \id,
 \]Thus $H$ is a self-similar, self-replicating, level-transitive subgroup which is Rabin-recognizable but is not B\"{u}chi-recognizable. 
\end{example}

The next example shows that there exist tree shift groups over $\{0,1\}^*$ which are not finitely constrained groups, in contrast to the one-dimensional case considered by Kitchens(\cite{Kitchens87}). 

\begin{example}[A tree shift group which is not finitely constrained]
Let $X = \{0,1\}$, $A = \Sym(X) = \{\id, \sigma \}$, and let $A$ act faithfully on $X$ by permutations, so that the group $A^{X^*} = \Aut(X^*)$. We use $\id$ to represent the identity of $C_2$ so that $e$ can be reserved here for the identity of $\Aut(X^*)$.  Let $a = \sigma (e,a) \in \Aut(X^*)$. In terms of labels, 
\[
 a_{(w)} = 
  \begin{cases}
   \sigma, & \text{if } w = 1^{n} \text{ for some } n \geq 0 \\ 
   \id, &\text{otherwise }. 
  \end{cases}. 
\] 

Any section of $a$ is either the identity or $a$, so the group generated by $\{e, a \}$ is self-similar. This group $\mathcal{O}$ is called the \textit{odometer group}, as it ``rolls over'' any word consisting of all 1's. We claim that $\overline{\mathcal{O}}$ is not a finitely constrained group.  The proof uses the structure of the portraits of $\mathcal{O}$. 

It follows by induction that 
\[ 
 a^{2n} = \id(a^n,a^n) 
\] 
and 
\[ 
 a^{2n+1} = \sigma(a^{n},a^{n+1}). 
\] 
We note that $a^{2n}_{(\epsilon)} = \id$, while $a^{2n+1}_{(\epsilon)} = \sigma$. We also observe that $a^{2^n} \in \Triv(n)$ for all $n \in \mathcal{N}$. 

Since $\mathcal{O}$ is self-similar, the closure $\overline{\mathcal{O}}$ is a tree shift group, We will show that $\overline{\mathcal{O}}$ is not finitely constrained. Suppose that $\overline{\mathcal{O}}$ is finitely constrained by some set $\mathcal{F}$ of forbidden blocks of size $n+1$. Consider the element $g \in A^{X^*}$ with root label $g_{\epsilon} = \id$ and sections given by $g_0 = e_{G}$, $g_1 = a^{2^{n}}$. Each pattern of size $n+1$ in $g$ is either a pattern in $e_G$ or a pattern in $a^{2^n}$, so $g$ must be in the shift space $X_{\mathcal{F}}$. 

\noindent Since we assumed $X_{\mathcal{F}} = \overline{\mathcal{O}}$, there must be a sequence of elements in $\mathcal{O}$ which converge to $g$. Since $g_{\epsilon} = \id$, this sequence must eventually consist of even powers of $a$, so \[
a^{2n_i} \rightarrow g.
\]

Then $(a^{2n_i})_{0} \rightarrow g_0$ and $(a^{2n_i})_1 \rightarrow g_1$. However, $(a^{2n_i})_0 = (a^{2n_i})_1$, but $g_{0} \neq g_{1}$. Therefore we have a contradiction, and $\overline{\mathcal{O}}$ is not finitely constrained. 

\end{example}

This result can also be shown by first proving a stronger result that relies on the structure of regular branch groups. 

\begin{proposition}\label{p:algebraic-law-not-finitely-constrained}
Let $G$ be a level-transitive group of tree automorphisms such that $G$ satisfies an algebraic law or has a nontrivial center. The topological closure $\overline{G}$ is not finitely constrained.
\end{proposition}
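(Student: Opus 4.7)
The approach is by contradiction: assume $\overline{G}$ is finitely constrained. By Theorem~\ref{t:regular-branch}, $\overline{G}$ is then the closure of some self-similar, regular branch group $H\le \Aut(X^*)$ that branches over $K := \Stab_H(s)$ for some $s\ge 0$. Level-transitivity of $G$ (and hence of $\overline{G}$) makes $\overline{G}$ infinite, which forces $K\neq\{1\}$; otherwise $H$ would embed faithfully into the finite group $\Aut(X^{[s]})$ and $\overline{G}$ would be finite. A nontrivial algebraic law on $G$ passes to $\overline{G}$, and hence to $H\subseteq\overline{G}$, by continuity of the word map together with density of $G$ in $\overline{G}$. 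A nontrivial central element $z\in Z(G)$ likewise lies in $Z(\overline{G})$, so in particular $z$ centralizes $H$. The plan is therefore reduced to the two standard ``no-center, no-law'' facts for a regular branch group with nontrivial branching subgroup: (i) no nontrivial element of $\Aut(X^*)$ centralizes $H$, and (ii) $H$ satisfies no nontrivial group-theoretic identity.

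For (i), let $1\ne z\in\Aut(X^*)$. Then $z(u)\neq u$ for some $u\in X^*$, and lengthening $u$ to $u'=u\,0^m$ with $|u'|\ge s$ still gives $z(u')\neq u'$, since descendants of $z(u)$ and of $u$ at the same depth cannot coincide. Fix any $1\ne k\in K$. Iterating the regular-branching relation $K\supseteq K\times\cdots\times K$ (one factor per child at level $s$) together with the self-similarity of $H$ yields an element $\tilde{k}\in H$ supported only on the subtree $u'X^*$ and acting as $k$ at $u'$. The conjugate $z\tilde{k}z^{-1}$ is supported on $z(u')X^*\neq u'X^*$, so $z\tilde{k}z^{-1}\neq \tilde{k}$ and $z$ does not commute with $\tilde{k}$. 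This contradicts $z\in Z(\overline{G})$ and disposes of the nontrivial-center case.

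For (ii), I would carry out an Abert-style support-tracking argument: given a nontrivial reduced word $w(x_1,\ldots,x_n)$, construct $g_1,\ldots,g_n\in H$ with $w(g_1,\ldots,g_n)\neq 1$ by plugging in carefully-placed copies of some fixed nontrivial $k\in K$, combined with a level-transitive element $\tau\in H$ to supply non-abelian shuffling between subtrees. The branching relation lets one re-anchor supports at arbitrary depth $\ge s$, and conjugation by $\tau$ (which permutes subtrees transitively via level-transitivity) produces non-commuting copies of $k$; routing the partial products of $w$ through mutually disjoint subtrees at a sufficiently deep level then prevents cancellations in $H$ even when $w$ reduces trivially in the free group. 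This is the principal obstacle of the proof: the center step is a single-vertex argument, whereas the no-law step requires uniform combinatorial bookkeeping across all reduced words $w$. Once (i) and (ii) are in hand, both alternative hypotheses on $G$ yield contradictions, establishing that $\overline{G}$ cannot be finitely constrained.
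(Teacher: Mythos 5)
Your overall reduction is the same as the paper's: assume $\overline{G}$ is finitely constrained, invoke Theorem~\ref{t:regular-branch} to obtain a regular branch structure with nontrivial branching subgroup, and derive a contradiction from the two facts that such a group has trivial center and satisfies no law. Your treatment of the center case is complete and correct: the observations that $Z(G)\subseteq Z(\overline{G})$ and $H\subseteq\overline{G}$, the reduction to showing that no nontrivial $z\in\Aut(X^*)$ centralizes $H$, and the disjoint-support argument with $\tilde{k}=\delta_{u'}(k)$ all work. This is in essence the standard proof of the fact the paper simply cites from \cite[Theorem 2(c)]{JustInfinite}.

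The gap is in the no-law case. What you offer there is a plan, not a proof, and you say as much (``the principal obstacle of the proof''). The phrase ``routing the partial products of $w$ through mutually disjoint subtrees \dots prevents cancellations'' names exactly the step that requires a genuine argument: for an arbitrary nontrivial reduced word $w$ one must exhibit a substitution in $H$ on which $w$ does not vanish, and the difficulty is precisely that $w$ may evaluate to the identity under many naive substitutions. Ab\'{e}rt's actual argument is an induction on the length of $w$, using the property that the pointwise stabilizer in $H$ of any finite set of boundary points acts nontrivially on every other boundary point (which holds for weakly branch groups, hence for your $H$ once $K\neq 1$); reconstructing this is a nontrivial piece of work that your sketch does not supply. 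The paper sidesteps it entirely by citing the result of \cite{Abert-Topological} that a group satisfying a law cannot be a branch group. To close your argument you should either cite that theorem, as the paper does, or actually carry out the induction; as written, the law half of the proposition is not established.
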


\begin{proof}
If a group $G$ satisfies an algebraic law or has a nontrivial center, then so does its closure $\overline{G}$. If $\overline{G}$ is finitely constrained, it must be a regular branch group by Theorem~\ref{t:regular-branch}. It follows from a result in~\cite{Abert-Topological} that a group that satisfies a law can not be a branch group. It is also known that the center of a regular branch group must be trivial~\cite[Theorem 2(c)]{JustInfinite}.
\end{proof}

At present, we do not know if all sofic tree shift groups are finitely constrained, and that question will be addressed in the next section.  

\section{Sofic Tree Shift Groups and Finitely Constrained Groups }

The remainder of this paper will be dedicated to describing sufficient conditions to ensure that a sofic tree shift group is finitely constrained. 

\subsection{Branching Structure and Sofic Tree Shift Groups}

In this part, we outline the structure of certain elements and subgroups of sofic tree shift groups. We begin by introducing the idea of \textit{grafting} one labeled tree onto another (see ~\cite[page 266]{NerodeAutomata})

\begin{definition}
Let $a,b \in A^{X^*}$ and $v \in X^*$. The \textit{grafting} of $b$ onto $a$ at $v$ is the element $g_{[a,b,v]}$ of $A^{X^*}$ given by 
\[ \left( g_{[a,b,v]} \right)_{(w)} = 
 \begin{cases} 
  b_{(u)}, &w=vu \in vX^* \\ 
  a_{(w)}, &w \not\in vX^* 
 \end{cases} 
\] 
\end{definition}

\begin{lemma}[Grafting Lemma]\label{l:grafting}
Let $\mathcal{A}$ be an unrestricted Rabin automaton and suppose $a, b \in \mathcal{L}(\mathcal{A})$ and $v \in X^*$ such that \begin{itemize}
\item[(i)] $a_{(v)} = b_{(\epsilon)}$ \\
\item[(ii)] there exist homomorphisms $\alpha_a: X^* \rightarrow S$ by which $\mathcal{A}$ accepts $a$ and $\alpha_b:X^* \rightarrow S$ by which $\mathcal{A}$ accepts $b$ such that $\alpha_a(v) = \alpha_b(\epsilon)$. 
\end{itemize}
Then $\mathcal{A}$ accepts the grafting of $b$ on $a$ at $v$. 
\end{lemma}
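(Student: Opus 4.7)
The plan is to exhibit an explicit homomorphism $\alpha_g : X^* \to S$ witnessing that $\mathcal{A}$ accepts $g := g_{[a,b,v]}$, by pasting together $\alpha_a$ (on the complement of the grafted subtree) and a shifted copy of $\alpha_b$ (on the grafted subtree $vX^*$). Concretely, I would set
\[
\alpha_g(w) = \begin{cases} \alpha_b(u), & w = vu \in vX^*, \\ \alpha_a(w), & w \notin vX^*. \end{cases}
\]
Hypothesis (ii), which gives $\alpha_a(v) = \alpha_b(\epsilon)$, is exactly what is needed for the two pieces to agree at the unique point $v$ in the overlap, so $\alpha_g$ is well-defined.

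Next I would verify, vertex by vertex, that $\alpha_g$ maps each transition bundle of $\mathcal{A}_g$ to a transition bundle of $\mathcal{A}$, i.e. that $(\alpha_g(w);\, g_{(w)};\, (\alpha_g(wx))_{x \in X}) \in \mathcal{T}$ for every $w \in X^*$. There are essentially three cases. If $w \in vX^*$, write $w = vu$; then every child $wx = v(ux)$ also lies in $vX^*$, and $g_{(w)} = b_{(u)}$, so the bundle at $w$ in $\mathcal{A}_g$ equals the bundle at $u$ in $\mathcal{A}_b$ under $\alpha_b$, which lies in $\mathcal{T}$ because $\alpha_b$ is a homomorphism into $\mathcal{A}$. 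If $w \notin vX^*$ and no child $wx$ equals $v$, then all children lie outside $vX^*$, $g_{(w)} = a_{(w)}$, and the bundle coincides with the one $\alpha_a$ already verifies.

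The only slightly delicate case is the gluing vertex: when $w \notin vX^*$ is the parent of $v$, so $v = wx_0$ for a unique $x_0 \in X$. Here $\alpha_g(w) = \alpha_a(w)$, $g_{(w)} = a_{(w)}$, $\alpha_g(wx) = \alpha_a(wx)$ for $x \neq x_0$, and $\alpha_g(wx_0) = \alpha_g(v) = \alpha_b(\epsilon)$. Hypothesis (ii) then replaces $\alpha_b(\epsilon)$ by $\alpha_a(v) = \alpha_a(wx_0)$, so the bundle at $w$ in $\mathcal{A}_g$ again coincides with the bundle at $w$ in $\mathcal{A}_a$ under $\alpha_a$, which lies in $\mathcal{T}$.

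This is the entire argument; there is no real obstacle beyond the bookkeeping at the gluing vertex, and that bookkeeping is precisely what assumptions (i) (equality of the local labels, ensuring the resulting $g$ has a consistent value at $v$) and (ii) (equality of the states at $v$) are designed to handle. Once $\alpha_g$ is verified to be a homomorphism $\mathcal{A}_g \to \mathcal{A}$, the definition of acceptance for unrestricted Rabin automata immediately gives $g \in \mathcal{L}(\mathcal{A})$.
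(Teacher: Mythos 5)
Your proposal is correct and is essentially identical to the paper's proof: the same pasted homomorphism, the same three-case verification of the transition bundles, and the same use of hypothesis (ii) at the parent of $v$. No further comment is needed.
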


\begin{proof}
Define a map $\alpha_{[a,b,v]}: X^* \rightarrow S$ by 
\[ 
 \alpha_{[a,b,v]}(w) = 
  \begin{cases} 
   \alpha_b(u), &w = vu \in vX^* \\ 
   \alpha_a(w), &w \not\in vX^* 
  \end{cases}. 
\] 
We claim that this is a homomorphism by which $\mathcal{A}$ accepts $g_{[a,b,v]}$.  We must show that for any $w \in X^*$, the transition bundle $\left( \alpha_{[a,b,v]}(w); (g_{[a,b,v]})_{(w)}; (\alpha_{[a,b,v]}(wx))_{x \in X}\right) \in \mathcal{T}$. \\

There are three cases. If $w \in vX^*$ and $w = vu$, then we have
\[ 
\left (\alpha_{[a,b,v]}(w); (g_{[a,b,v]})_{(w)}; (\alpha_{[a,b,v]}(wx))_{x \in X} \right) = \left( \alpha_b(u)); b_{(u)}; (\alpha_b(ux))_{x \in X} \right),  \] 
and $(\alpha_b(u)); b_{(u)}; (\alpha_b(ux))_{x \in X}) \in \mathcal{T}$, since $\alpha_b$ accepts $b$. If $w \not\in vX^*$ and $wx \neq v$ for any $x \in X$, then $\alpha_{[a,b,v]}(w) = \alpha_{a}(w)$, and
\[
 \left( \alpha_{[a,b,v]}(w); (g_{[a,b,v]})_{(w)}; (\alpha_{[a,b,v]}(wx))_{x \in X} \right) = \left( \alpha_a(w); a_{(w)}; (\alpha_a(wx))_{x \in X} \right),
\] 
where $\left( \alpha_a(w); a_{(w)}; (\alpha_a(wx))_{x \in X}) \right) \in \mathcal{T}$, since $\alpha_a$ accepts $a$. Finally, if $v = wx$ for some $x \in X$, then using the fact that $\alpha_{[a,b,v]}(wx) = \alpha_b(\epsilon) = \alpha_a(v)$ gives that 
\[
 \left( \alpha_{[a,b,v]}(w); (g_{[a,b,v]})_{(w)}; (\alpha_{[a,b,v]}(wx))_{x \in X} \right) = \left( \alpha_a(w); a_{(w)}; (\alpha_{a}(wx))_{x \in X} \right) 
\] 
and, again,  $\left( \alpha_a(w); a_{(w)}; (\alpha_{a}(wx))_{x \in X} \right) \in \mathcal{T}$ since $\alpha_a$ accepts $a$. This completes the proof. 
\end{proof} 

\begin{definition}
For $v \in X^*$ and $f \in A^{X^*}$, we denote $g_{[e_G,f,v]}$ by $\delta_{v}(f)$. Note that from the definition, $\delta_{v}(g)$ is given by 
\[ 
 (\delta_{v}(g))_{(w)} = 
  \begin{cases} 
   g_{(z)}, &w = vz \text{ for some } z \in X^* \\ 
   e_A,     &\text{otherwise } 
  \end{cases}  
\] 
\end{definition}

The following useful properties of the $\delta$ operator are easily verified. 

\begin{lemma}
For all $g,h \in A^{X^*}$ and $u, v ,w \in X^*$, the following hold. \begin{itemize}
\item[(i)] $[\delta_{vu}(g)]_{v} = \delta_{u}(g)$
\item[(ii)] $\delta_v(\delta_w(g)) = \delta_{vw}(g)$
\item[(iii)] If $|w| = k$ and $g \in \Triv_{G}(n)$, then $\delta_w(g) \in \Triv_{G}(n+k)$.
\item[(iv)] $\delta_v(gh) = \delta_v(g)\delta_v(h)$ 
\end{itemize}
\end{lemma}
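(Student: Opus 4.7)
The plan is to verify each of the four identities directly from the definition
\[
(\delta_v(g))_{(w)} = \begin{cases} g_{(z)}, & \text{if } w = vz \text{ for some } z \in X^*, \\ e_A, & \text{otherwise.} \end{cases}
\]
Parts (i), (ii), and (iii) are essentially bookkeeping. For (i) I would compute $([\delta_{vu}(g)]_v)_{(w)} = (\delta_{vu}(g))_{(vw)}$ and split on whether $w \in uX^*$: in the positive case, $vw = vu z$ gives the label $g_{(z)}$, and otherwise the label is $e_A$; this matches $(\delta_u(g))_{(w)}$ in both cases. For (ii), applying the definition twice, $(\delta_v(\delta_w(g)))_{(t)}$ is nontrivial precisely when $t = vs$ and $s = wz$, i.e.\ when $t = vwz$, and then equals $g_{(z)}$, which is exactly $(\delta_{vw}(g))_{(t)}$. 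For (iii), a nonzero label of $\delta_w(g)$ can only occur at words of the form $wz$, and the hypothesis $|t| = |wz| < n+k$ with $|w| = k$ forces $|z| < n$, so $g_{(z)} = e_A$ by $g \in \Triv_G(n)$; hence $\delta_w(g) \in \Triv_G(n+k)$.

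Part (iv) is the only one that needs real care, because multiplication in $F(A,X,\phi)$ is a permutational wreath product rather than a componentwise operation: the relevant identity, from the preceding structural lemma, is $(gh)_{(w)} = g_{(h(w))}\, h_{(w)}$, so the action of $h$ on $X^*$ enters the formula. The first task is therefore a short auxiliary claim about the action of $\delta_v(h)$ on $X^*$, namely that $\delta_v(h)$ fixes every word outside $vX^*$ and sends $vz$ to $v \cdot h(z)$. This follows by induction on the prefix of the argument, using the recursive identification of $\delta_v$ with the grafting at $v$: outside $vX^*$ every label of $\delta_v(h)$ is $e_A$, so no permutation occurs off that subtree, while inside $vX^*$ the section of $\delta_v(h)$ at $v$ is $h$ itself.

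With this in hand, (iv) reduces to a two-case computation. If $w \notin vX^*$, then $\delta_v(h)(w) = w$ is still outside $vX^*$, so
\[
(\delta_v(g)\,\delta_v(h))_{(w)} = (\delta_v(g))_{(\delta_v(h)(w))}\,(\delta_v(h))_{(w)} = e_A \cdot e_A = e_A = (\delta_v(gh))_{(w)}.
\]
If $w = vz$, then $\delta_v(h)(w) = v \cdot h(z) \in vX^*$, so
\[
(\delta_v(g)\,\delta_v(h))_{(w)} = (\delta_v(g))_{(v \cdot h(z))}\,(\delta_v(h))_{(vz)} = g_{(h(z))}\, h_{(z)} = (gh)_{(z)} = (\delta_v(gh))_{(w)}.
\]
The main obstacle is simply keeping the wreath-product twist straight in this last line: once the action of $\delta_v(h)$ is pinned down precisely, the twist in $(gh)_{(z)} = g_{(h(z))}\,h_{(z)}$ is matched exactly by the twist introduced on the left-hand side, and the identity follows.
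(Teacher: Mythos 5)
Your verification is correct and is exactly the routine check the paper has in mind (the lemma is stated as ``easily verified'' with no proof given); in particular you rightly isolate (iv) as the only delicate point, and pinning down the action of $\delta_v(h)$ on $X^*$ (fixing everything outside $vX^*$ and sending $vz\mapsto v\cdot h(z)$) before applying $(gh)_{(w)} = g_{(h(w))}h_{(w)}$ is the right way to handle the wreath-product twist. The one caveat is in (iii): your argument establishes $\delta_w(g)\in\Triv(n+k)$, but membership in $G$ itself (needed for $\Triv_G(n+k)=\Triv(n+k)\cap G$) does not follow from $g\in\Triv_G(n)$ for a general subgroup $G$ --- this imprecision is already present in the statement as written, and elsewhere the paper secures $\delta_w(g)\in G$ separately, via the Grafting Lemma and Proposition~\ref{p:movement}, precisely because it is not automatic.
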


These $\delta$ operators provide the appropriate dynamical viewpoint for a regular branch group in this setting. (Recall the definitions of \textit{symbolic branch} and \textit{regular branch} groups from Subsection~\ref{ss:group-structure}.)

\begin{proposition}\label{p:p-symbolic-branch}
Let $G$ be a subgroup of the full tree shift $A^{X^*}$, and $k \geq 1$. Then $G$ is a \textit{symbolic branch group, symbolically branching over} $\Triv_G(k)$ if $\delta_{x}(g) \in \Triv_{G}(k+1)$ for all $g \in \Triv_{G}(k)$ and $x \in X$. 
\end{proposition}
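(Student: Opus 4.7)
The plan is to observe that the element featured in the symbolic branching condition decomposes as a product of $\delta$-shifts, so that the hypothesis yields the conclusion almost immediately. To unpack the definition, we need to show that for any choice of sections $h_x \in \Triv_G(k)$ indexed by $x \in X$, the element $E \in A^{X^*}$ with trivial root label and $x$-section equal to $h_x$ belongs to $\Triv_G(k)$.

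The key identity I would establish first is
\[
E = \prod_{x \in X} \delta_x(h_x).
\]
To justify this, note that $\delta_x(h_x)$ is supported on the subtree $xX^*$ and restricts there to a copy of $h_x$. Since distinct values of $x$ yield disjoint supports, part (v) of the preceding lemma implies that the factors pairwise commute, so the product is well defined, and a direct label-by-label inspection shows the product has trivial root label with $x$-section equal to $h_x$, which is exactly $E$. Once this decomposition is in place, the hypothesis gives $\delta_x(h_x) \in \Triv_G(k+1)$ for every $x$. Since $\Triv(k+1) \subseteq \Triv(k)$, each factor lies in $\Triv_G(k)$, a subgroup of $G$, and therefore the product $E$ lies in $\Triv_G(k)$, as required.

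There is essentially no serious obstacle in this argument; once the decomposition $E = \prod_x \delta_x(h_x)$ is noticed, the rest is a one-line invocation of the hypothesis together with the containment $\Triv_G(k+1) \subseteq \Triv_G(k)$. The only step demanding a little care is the verification that the product in the semidirect product $A \ltimes G^X$ really produces $E$; this follows from the multiplication rule $(a_1,b_1)(a_2,b_2) = (a_1 a_2, b_1^{a_2} b_2)$, since when the leading $A$-components are all $e_A$ the induced action on $G^X$ is trivial and the product reduces to coordinatewise multiplication.
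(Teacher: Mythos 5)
Your proof is correct, and it supplies exactly the argument the paper leaves implicit: the proposition is stated without proof, but the decomposition $(h_x)_{x\in X}=\prod_{x\in X}\delta_x(h_x)$ you use is precisely the identity the authors invoke elsewhere (e.g.\ in the proof of the main theorem and in the appendix, where $f'=\prod_{x\in X}\delta_x(f^{(x)})$ plays the same role). Your verification via the semidirect-product multiplication, together with the observation that $\Triv_G(k+1)\subseteq\Triv_G(k)$ is a subgroup, closes the argument with no gaps.
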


\begin{proposition}\label{p:alt-regular-branch}
Let $G$ be a subgroup of $\Aut(X^*)$.Then $G$ is a \textit{regular branch group} if it is level-transitive and it is a symbolic branch group over $\Triv_{G}(k)$.
\end{proposition}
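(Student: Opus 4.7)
The plan is to reduce the claim to the observation, already noted in Subsection~\ref{ss:group-structure}, that for faithful actions on $X$ the two notions of branching (symbolic branching over $\Triv_G(k)$ and regular branching over $\Stab_G(k)$) coincide. Since every $G \leq \Aut(X^*)$ is associated to the standard faithful action of $\Sym(X)$ on $X$, this reduction applies, and it remains only to verify the identification of $\Triv_G$ with $\Stab_G$ in the faithful case and then unwind definitions.

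First I would establish that $\Triv_G(n) = \Stab_G(n)$ for every $n \geq 0$. The inclusion $\Triv_G(n) \subseteq \Stab_G(n)$ holds in general: if every section label of $f$ at words of length less than $n$ equals $e_A$, then $f$ acts trivially at each vertex above level $n$, hence fixes every word of length $n$. For the reverse inclusion, given $f \in \Stab_G(n)$ and any $u$ with $|u| < n$, reading off the action of $f$ on words of length $|u|+1 \leq n$ forces the local permutation $f_{(u)}$ to act trivially on $X$; because the action of $\Sym(X)$ on $X$ is faithful, this forces $f_{(u)} = e_A$, and so $f \in \Triv_G(n)$.

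With this identification in hand, the remainder is a direct comparison of definitions. Writing $m = |X|$, the hypothesis that $G$ is a symbolic branch group symbolically branching over $\Triv_G(k)$ states that for every $m$-tuple $h_0, \ldots, h_{m-1} \in \Triv_G(k)$ the element $(h_0, \ldots, h_{m-1})$, having trivial root label and sections $h_0, \ldots, h_{m-1}$, lies in $\Triv_G(k)$. Substituting $\Stab_G(k)$ for $\Triv_G(k)$ throughout, by the previous paragraph, yields exactly the branching clause in the definition of a regular branch group over $\Stab_G(k)$. Combined with the assumed level-transitivity, both clauses of that definition are verified, and $G$ is a regular branch group. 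The argument presents no real obstacle; the entire substantive point is recognizing that faithfulness of the action collapses the distinction between $\Triv$ and $\Stab$, after which the conclusion is a matter of unwinding terminology.
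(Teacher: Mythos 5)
Your proof is correct and follows exactly the route the paper intends: the paper states this proposition without proof, relying on its earlier remark in Subsection~\ref{ss:group-structure} that for a faithful action the notions of symbolic branching over $\Triv_G(k)$ and regular branching over $\Stab_G(k)$ coincide, and your verification that $\Triv_G(n)=\Stab_G(n)$ in the faithful case (using prefix-preservation and faithfulness of $\Sym(X)$ on $X$) is precisely the missing justification. Nothing further is needed.
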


Using this notion, Theorem ~\ref{t:regular-branch} may be generalized to the case of self-similar groups which are considered here. The proof is essentially a reproduction of those found in ~\cite{Problems} and ~\cite{HausdorffDim}; we include it in the Appendix for the sake of completeness. 

For the remainder of this subsection, we assume $A$ is a finite group with identity element $e_A$, and let $X$ be a finite alphabet. We also fix an action $\phi$ of $A$ on $X$, and identify the full tree shift group $\mathcal{F}(A,X,\phi)$ with the full tree shift $A^{X^*}$. Also, we let $G$ be a subgroup of $\mathcal{F}(A,X,\phi) = A^{X^*}$ such that the portraits of $G$ form a sofic tree subshift of $A^{X^*}$. The identity of $G$ is denoted by $e_G$. Additionally, we let $\mathcal{A} = (S,X,A,\mathcal{T})$ be an unrestricted Rabin automaton so that $G = \mathcal{L}(\mathcal{A})$, and assume that $|S| = N$. 

\begin{lemma}\label{l:pigeonhole}
If $g \in G$ and $\alpha_g: X^* \rightarrow S$ is a homomorphism by which $\mathcal{A}$ accepts $g$, then there exists an integer $k = k(g)$ satisfying the following conditions:
\begin{itemize}
\item[(i)] for any $w \in X^k$, the restriction of $\alpha$ to the vertices in the path from $\epsilon$ to $w$ is not injective 
\item[(ii)] $|\alpha_g(X^{[k]})| = |\alpha_g(X^{[m]})|$'' for $m \geq k+1$
\item[(iii)] $1 \leq k \leq 2N - 1$.
\end{itemize}
\end{lemma}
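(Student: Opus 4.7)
The plan is a double pigeonhole argument that exploits $|S|=N$ in two orthogonal ways. Condition (i) will follow from a bound on the length of injective paths through $S$; condition (ii) will follow from a bound on how many times the total image $\alpha_g(X^{[m]})$ can strictly grow; and the bound $k\leq 2N-1$ in (iii) will emerge by combining the two.

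For (i), I would observe that for any $w\in X^k$ the path $\epsilon=w|_0,w|_1,\ldots,w|_k=w$ contains $k+1$ vertices, all mapped by $\alpha_g$ into the $N$-element state set $S$. As soon as $k\geq N$, the pigeonhole principle forces a collision on this path, so (i) is automatic for every $k\geq N$. For (ii), I would analyze the nondecreasing integer sequence $f(m)=|\alpha_g(X^{[m]})|$, which starts at $f(0)=1$ and is capped by $N$, and so admits at most $N-1$ strict jumps. Over any window of $N$ consecutive indices at least one $k$ must satisfy $f(k)=f(k+1)$; applied to $\{N,N+1,\ldots,2N-1\}$, this produces such a $k$ in the required range $[N,2N-1]$, at which point (i), (ii), and (iii) would simultaneously hold once the local equality is strengthened appropriately.

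The step I expect to require the most care is upgrading the local equality $f(k)=f(k+1)$ to the global equality $f(k)=f(m)$ for every $m\geq k+1$ demanded by (ii). Because the automaton is nondeterministic, a single nonjump of $f$ does not a priori preclude fresh states from appearing at deeper levels. My plan to handle this is to sharpen the choice of $k$: rather than any nonjump in the window, take the least $k$ in $[N,2N-1]$ at which $f$ has already attained its supremum $|\alpha_g(X^*)|$, using the total jump count $\leq N-1$ together with the cycle structure forced by (i) — every level-$k$ vertex has an ancestor with the same $\alpha_g$-image — to confine the final strict jump of $f$ to this window. Once this is done, the verification that such a $k$ fulfills all three conditions is a direct bookkeeping check, and the constant $2N-1$ appears as exactly $N$ (for (i)) plus $N-1$ (for the maximum number of jumps that (ii) must absorb).
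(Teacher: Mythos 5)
Your handling of (i), and your window argument producing a level $k\in[N,2N-1]$ with $|\alpha_g(X^{[k]})|=|\alpha_g(X^{[k+1]})|$, coincide exactly with the paper's proof: pigeonhole on the $k+1$ states along a root-to-level-$k$ path, plus counting the strict increases of the nondecreasing, $N$-bounded function $m\mapsto|\alpha_g(X^{[m]})|$ over the window $\{N,\dots,2N-1\}$. The genuine problem is precisely the step you flag as delicate: upgrading this local equality to the global one that the literal wording of (ii) demands. Your proposed repair --- take the least $k$ in the window at which $|\alpha_g(X^{[k]})|$ already equals $|\alpha_g(X^*)|$, and use the state repetitions along paths forced by (i) to confine the last strict jump to the window --- cannot work. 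Because the automaton is nondeterministic, a vertex can carry the same state as one of its ancestors while its children are sent to states never seen before; a repeated state gives no control over which transition bundle is used below it, so the jumps of $m\mapsto|\alpha_g(X^{[m]})|$, though at most $N-1$ in number, can occur at arbitrarily deep levels. Concretely, with $S=\{s_1,s_2\}$ and bundles $(s_1;a;(s_1,s_1))$, $(s_1;a;(s_2,s_2))$, $(s_2;a;(s_2,s_2))$, the run that assigns $s_1$ to every vertex of length at most $99$ and $s_2$ below is a valid accepting homomorphism for the constant configuration, and its image first grows at level $100$, far outside $[N,2N-1]=[2,3]$. No choice of $k\le 2N-1$ satisfies the global version of (ii) for this homomorphism, so no sharpening of the window argument can rescue it.

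The resolution is not to prove the global statement but to weaken it. The paper's own proof establishes only the local equality $\alpha_g\bigl(X^{[k]}\bigr)=\alpha_g\bigl(X^{[k+1]}\bigr)$ (the cardinality equality together with $X^{[k]}\subseteq X^{[k+1]}$ gives equality of the sets), and the only place the lemma is invoked --- the construction of the homomorphism $\alpha_e$ accepting the identity in the lemma that follows --- uses exactly this local equality and nothing more. If you read (ii) as asserting the equality for $m=k+1$ only, your argument is already complete and is the paper's argument; the extra strength you are trying to supply is both unobtainable and unnecessary.
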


\begin{proof}
 Condition (i) is satisfied for any $n \geq N$, by applying the Pigeonhole Principle to the labels of the vertices in the path from $\epsilon$ to a vertex $w \in X^n$. To see (ii), note that the map $\psi: n \mapsto |\alpha(X^{[n])}|$ is a nondecreasing function, bounded above by $N$, with $\psi(N) \geq 1$. Thus, there must be a $k \leq 2N-1$ such that conditions (i) and (ii) are satisfied.
\end{proof}

Henceforth, for $g \in G$, we will write $k(g)$ for the minimal $k$ from Lemma~\ref{l:pigeonhole}.

\begin{lemma}
If $g \in \Triv_G(2N)$ via a homomorphism $\alpha_g$, then there exists a homomorphism $\alpha_e$ that accepts the identity such that $\alpha_e$ and $\alpha_g$ agree on $X^{[k(g)]}$. 
\end{lemma}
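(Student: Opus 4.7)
My plan is to build $\alpha_e$ by keeping $\alpha_g$ on the top $k+1$ levels of the tree and, below that, repeatedly splicing in $e_A$-labeled bundles borrowed from inside $X^{[k]}$. Writing $k = k(g)$, the key structural observation is that every state $\alpha_g$ ever visits already appears inside $X^{[k]}$: by condition~(ii) of Lemma~\ref{l:pigeonhole} together with the monotonicity of $\alpha_g(X^{[m]})$ in $m$, the set $T := \alpha_g(X^{[k]})$ in fact coincides with $\alpha_g(X^*)$. Moreover, since $k \leq 2N-1$ and $g \in \Triv_G(2N)$, the label $g_{(v)}$ equals $e_A$ for every $v \in X^{[k]}$.

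For each $t \in T$ I would fix a representative $v_t \in X^{[k]}$ with $\alpha_g(v_t) = t$. The transition bundle of $\alpha_g$ at $v_t$ is then
\[
B_t \;=\; \bigl(t;\, e_A;\, (\alpha_g(v_t x))_{x \in X}\bigr) \;\in\; \mathcal{T},
\]
and each child state $\alpha_g(v_t x)$ lies in $\alpha_g(X^{[k+1]}) = T$, again by condition~(ii). So from every $t \in T$ I have on hand an $e_A$-labeled bundle whose children all lie back inside $T$. I would then define $\alpha_e$ by $\alpha_e(v) = \alpha_g(v)$ for $v \in X^{[k]}$, and recursively, whenever $|y| \geq k$ and $\alpha_e(y) = t$, setting $\alpha_e(yx) = \alpha_g(v_t x)$ for each $x \in X$. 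A short induction shows that $\alpha_e$ stays inside $T$ below level $k$, so the recursion never breaks down and the representative $v_{\alpha_e(y)}$ is always available.

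Finally I would verify that $\alpha_e$ is a homomorphism accepting $e_G$ and agrees with $\alpha_g$ on $X^{[k]}$. Agreement is built into the definition. For the homomorphism check, at each vertex $v$ with $|v| < k$ the bundle used by $\alpha_e$ coincides with the one used by $\alpha_g$, which is $e_A$-labeled and lies in $\mathcal{T}$ because $g_{(v)} = e_A$; and at each vertex $y$ with $|y| \geq k$ the bundle used is precisely $B_{\alpha_e(y)}$, which is in $\mathcal{T}$ by construction. Every bundle used is $e_A$-labeled, so $\alpha_e$ accepts the constant-$e_A$ configuration, which is the portrait of $e_G$. The only delicate point throughout is the stabilization $\alpha_g(X^{[m]}) = T$ for $m \geq k$ that keeps the recursion from escaping $T$; this is exactly the content of condition~(ii) of Lemma~\ref{l:pigeonhole}, which is why the $k(g)$ of that lemma (rather than any $k$ merely large enough for the path-pigeonhole condition~(i)) is the right threshold to put in the statement.
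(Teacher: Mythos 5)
Your proposal is correct and follows essentially the same route as the paper: the paper also fixes, for each state $s \in \alpha_g(X^{[k]})$, a representative word $\beta_g(s) \in X^{[k]}$ (your $v_t$), keeps $\alpha_e = \alpha_g$ on $X^{[k]}$, and extends recursively by $\alpha_e(wx) = \alpha_g(\beta_g(\alpha_e(w))x)$, using the stabilization $\alpha_g(X^{[k]}) = \alpha_g(X^{[k+1]})$ from Lemma~\ref{l:pigeonhole}(ii) and the fact that $g \in \Triv_G(2N)$ forces every bundle in play to be $e_A$-labeled. Your explicit remarks that the recursion stays inside $T$ and that $k \leq 2N-1$ guarantees the $e_A$ labels are the same points the paper relies on implicitly.
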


\begin{proof}
Let $k = k(g)$. By the previous lemma, $\alpha_g\left(X^{[k]}\right) = \alpha_g\left(X^{[k+1]}\right)$. Note that for all $w \in X^{[k]}$, the transition bundle $(\alpha_g(w); e_A; (\alpha_g(wx))_{x \in X})$ must be in $\mathcal{T}$.  
For each $s \in \alpha_g(X^{[k]})$, choose a word $v \in X^{[k]}$ such that $\alpha_g(v) = s$ and denote $v$ by $\beta_g(s)$. We now define a homomorphism $\alpha_e: X^* \rightarrow S$ by which $\mathcal{A}$ accepts $e_G$. If $w \in X^{[k]}$, set $\alpha_e(w) = \alpha_g(w).$ For $n \geq k+1$, we recursively define $\alpha_e$ on $X^n$ by setting $\alpha_e(wx) = \alpha_g(\beta_g(\alpha_e(w))x)$. 

To see that $\alpha_e$ is a homomorphism by which $\mathcal{A}$ accepts the identity, note that $\alpha_g$ and $\alpha_e$ agree on $X^{[k]}$, and that for all $v$ of length greater than $k$, we must have the transition bundle $(\alpha_e(v); e_A; (\alpha_e(vx))_{x \in X}) \in \mathcal{T}$, since by construction \[ (\alpha_e(v); e_A; (\alpha_e(vx))_{x \in X})) = (\alpha_g(v^*); e_A; (\alpha_g(v^*x))_{x \in X}) \] where $v^* \in X^k$ denotes $\beta_g(\alpha_e(v))$. This completes the proof. 
\end{proof}

\begin{proposition}
Assume $g \in \Triv_{G}(2N)$, $k = k(g)$, $v \in X^{[k]}$, and $u$ is a prefix of $v$. If there exists a homomorphism $\alpha_g: X^* \rightarrow S$ by which $\mathcal{A}$ accepts $g$ such that $\alpha_g(u) = \alpha_g(v)$, then $\delta_v(g_u) \in \Triv_{G}(2N + |v| - |u|)$. 
\end{proposition}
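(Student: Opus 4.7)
The plan is to verify separately the two aspects of the claim: that the portrait of $\delta_v(g_u)$ is trivial on every level below $2N+|v|-|u|$, and that $\delta_v(g_u)$ actually belongs to $G$. The first is a direct label-chasing computation using $g \in \Triv_G(2N)$; the second is where the hypothesis $\alpha_g(u) = \alpha_g(v)$ enters, via the Grafting Lemma.

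For the triviality part, I would simply unravel the definition of $\delta_v$. Outside the subtree rooted at $v$, every label of $\delta_v(g_u)$ is already $e_A$. At a vertex of the form $vz$, the label is $(g_u)_{(z)} = g_{(uz)}$, which equals $e_A$ whenever $|uz| < 2N$, i.e.\ whenever $|z| < 2N - |u|$. Translating to global depth $|vz| = |v|+|z|$, every vertex of depth less than $2N+|v|-|u|$ carries the trivial label, so $\delta_v(g_u) \in \Triv(2N+|v|-|u|)$.

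For membership in $G$, I would apply the Grafting Lemma with $a = e_G$, $b = g_u$, and the given $v$, noting that $g_{[e_G,g_u,v]}$ coincides with $\delta_v(g_u)$ by definition. The first hypothesis, $a_{(v)} = b_{(\epsilon)}$, reduces to $e_A = g_{(u)}$, which holds because $|u| \leq |v| \leq k \leq 2N-1$ and $g \in \Triv_G(2N)$. For the state-matching hypothesis, I would use the preceding lemma to obtain a homomorphism $\alpha_e : X^* \to S$ by which $\mathcal{A}$ accepts $e_G$ and which agrees with $\alpha_g$ on $X^{[k]}$, and then define $\alpha_{g_u}(w) := \alpha_g(uw)$, which is routinely verified to be a homomorphism by which $\mathcal{A}$ accepts the section $g_u$ (self-similarity of $G$ manifests on the automaton side as the ability to shift an accepting homomorphism). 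The matching condition $\alpha_e(v) = \alpha_{g_u}(\epsilon)$ then becomes $\alpha_g(v) = \alpha_g(u)$, which is exactly the given hypothesis, so the Grafting Lemma yields $\delta_v(g_u) \in G$.

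There is no serious obstacle: the content of the argument is the single observation that, under the natural choices of accepting homomorphisms, the state-matching condition of the Grafting Lemma specializes to precisely the hypothesis $\alpha_g(u) = \alpha_g(v)$. The remaining work is bookkeeping — tracking the depth bound $2N+|v|-|u|$, checking that $k = k(g) \leq 2N - 1$ lets the preceding lemma be invoked, and confirming that the shift $w \mapsto \alpha_g(uw)$ is indeed a valid accepting homomorphism for $g_u$.
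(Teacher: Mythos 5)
Your proposal is correct and follows essentially the same route as the paper: invoke the preceding lemma to get an accepting homomorphism $\alpha_e$ for $e_G$ agreeing with $\alpha_g$ on $X^{[k]}$, shift $\alpha_g$ to get an accepting homomorphism for $g_u$ with $\alpha_{g_u}(\epsilon)=\alpha_g(u)$, and observe that the state-matching condition of the Grafting Lemma reduces to the hypothesis $\alpha_g(u)=\alpha_g(v)$. Your explicit verification of the depth bound $2N+|v|-|u|$ is a small piece of bookkeeping the paper leaves implicit, but it is the same argument.
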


\begin{proof}
Assume $\alpha_g(u) = \alpha_g(v)$. By the previous Lemma, there exists $\alpha_e$ which accepts $e_G$ such that $\alpha_e$ and $\alpha_g$ agree on $X^{[k]}$. Since $G$ is self-similar, there is a homomorphism $\alpha_{g_u}$ by which $\mathcal{A}$ accepts $g_u$ such that $\alpha_{g_u}(\epsilon) = \alpha_g(u)$. (The map $\alpha_{g_u}$ given by $\alpha_{g_u}(w) = \alpha_g(uw)$ is easily seen to be such a homomorphism.) Now we have that 
\[ 
 \alpha_{g_u}(\epsilon) = \alpha_e(u)  = \alpha_e(v) 
\] 
and  
\[ 
 (g_u)_{(\epsilon)} = g_{(u)} =  e_A = (e_G)_{(v)}. 
\] 
Applying the Grafting Lemma yields the desired result. 
\end{proof}

\begin{cor}\label{c:shift}
Assume $g \in \Triv_{G}(2N)$ , $k = k(g)$, and $u, v \in X^{[k]}$ with $u$ a proper prefix of $v$ and $|u| = j$. If there exists a homomorphism $\alpha_g: X^* \rightarrow S$ by which $\mathcal{A}$ accepts $g$ such that $\alpha_g(u) = \alpha_g(v)$, then there exists $u' \in X^{j+1}$ such that $\delta_{u'}(g_u) \in \Triv_G(2N+1)$  \\
\end{cor}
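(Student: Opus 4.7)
The plan is to derive this corollary directly from the previous Proposition by taking a single section, and then to check triviality by support accounting. Since $u$ is a \emph{proper} prefix of $v$ and $|u|=j$, we have $|v|\ge j+1$, so we can split $v = wu'$ with $u'\in X^{j+1}$ and $|w| = |v|-j-1\ge 0$. The hypothesis on the triple $(g,u,v)$ is exactly the hypothesis of the previous Proposition (and we use the same accepting homomorphism $\alpha_g$), which yields
\[
\delta_v(g_u) \;=\; \delta_{wu'}(g_u) \;\in\; \Triv_G\bigl(2N + |v| - |u|\bigr) \;\subseteq\; G.
\]

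The next step is to shift the grafting closer to the root using that $G$, being a sofic tree shift group, is self-similar. Part (i) of the preceding lemma on $\delta$, namely $[\delta_{wu'}(h)]_w = \delta_{u'}(h)$, applied to $h = g_u$ gives
\[
\delta_{u'}(g_u) \;=\; \bigl[\delta_v(g_u)\bigr]_w,
\]
and the right-hand side lies in $G$ because sections of elements of $G$ lie in $G$. So $\delta_{u'}(g_u)\in G$.

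Finally, I would check the triviality level directly. Since $g\in \Triv_G(2N)$ and $|u|=j$, the section $g_u$ lies in $\Triv_G(2N-j)$: its label at $z$ equals $g_{(uz)}$, which vanishes whenever $|uz|<2N$, i.e.\ whenever $|z|<2N-j$. Part (iii) of the preceding lemma on $\delta$, applied with $|u'| = j+1$, then gives
\[
\delta_{u'}(g_u) \in \Triv_G\bigl((2N-j)+(j+1)\bigr) = \Triv_G(2N+1),
\]
which is exactly the desired conclusion. I do not anticipate a genuine obstacle: the argument is a three-step assembly of the previous Proposition, self-similarity, and two bookkeeping identities for $\delta$ already recorded earlier. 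The only subtlety to keep an eye on is the case $|v| = j+1$, in which $w=\epsilon$ and $u'=v$, and the first displayed line reduces to the previous Proposition applied directly at $u'$; this degenerate case is handled by the same formulas.
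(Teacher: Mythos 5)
Your proposal is correct and follows essentially the same route as the paper: apply the preceding Proposition to get $\delta_v(g_u)\in\Triv_G(2N+|v|-|u|)$, factor $v=wu'$ with $|u'|=j+1$, and use the identity $[\delta_{wu'}(h)]_w=\delta_{u'}(h)$ together with self-similarity of $G$. The only difference is cosmetic: you verify the triviality level $2N+1$ by direct support accounting on $g_u$, whereas the paper leaves that bookkeeping implicit after taking the section at $w=v_1$.
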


\begin{proof}
By the previous proposition, we know that $\delta_v(g_u) \in \Triv(2N +|v| - |u|)$. Since $v > u$, can write $v = v_1u'$ where $v_1$ is some word (possibly empty) and $|u'| = |u| + 1$. Then we have that $\left[ \delta_v\left(g_u\right) \right]_{v_1} = \delta_{u'}(g_u)$. 
\end{proof}

Recall that for any group, \textit{conjugation} is a right action of the group on itself given by $g^h = h^{-1}gh$. Given $G \leq A^{X^*}$, the \textit{normalizer of G}, denoted by$N_{A^{X^*}}(G)$, consists of the elements of $A^{X^*}$ which leave $G$ fixed under conjugation; i.e.,
\[ 
N_{A^{X^*}}(G) = \{ h \in A^{X^*} \; \mid \; g^h \in G \text{ for all } g \in G \}. 
 \]

The following lemma is proven in ~\cite[Lemma 1]{PatternClosure} for self-similar groups of tree automorphisms. We will not reproduce the proof here, since it is a lengthy computation which is easily generalized to our current setting. 

\begin{lemma}\label{l:movement}
Let $g, h \in A^{X^*}$ and $u \in X^*$. Then $(\delta_u(h))^{g} = \delta_v(h^{(g_v)})$, where $v = g^{-1}(u)$. 
\end{lemma}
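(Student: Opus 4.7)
The plan is a label-by-label verification guided by the following intuitive picture: the element $\delta_u(h)$ has its nontrivial portion concentrated on the subtree $uX^*$, and conjugation by $g$ moves that nontrivial portion to the preimage $g^{-1}(uX^*) = vX^*$. Furthermore, on the subtree rooted at $v$ the action of $g$ in local coordinates (via the canonical identification $w' \leftrightarrow vw'$) is precisely given by its section $g_v$, which explains why the grafted element becomes $h^{g_v}$ rather than $h$. Since the portrait map is a bijection, it suffices to verify that both sides of the equation agree vertex-by-vertex. I would compare labels at an arbitrary $w \in X^*$, splitting into two cases depending on whether or not $w \in vX^*$.

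The case $w \notin vX^*$ is essentially immediate. Because $g$ preserves prefixes and $u = g(v)$, the image $g(w) \notin uX^*$, so $\delta_u(h)$ fixes $g(w)$ and has trivial label there. Expanding the label of $g^{-1}\delta_u(h)g$ at $w$ using $(ab)_{(z)} = a_{(b(z))} b_{(z)}$ and $(g^{-1})_{(z)} = (g_{(g^{-1}(z))})^{-1}$ causes the $g^{-1}$ and $g$ contributions to telescope to $e_A$, which matches the right-hand side since $\delta_v$ is trivial off of $vX^*$. In the case $w = vw'$, one computes $g(w) = u \cdot g_v(w')$, so $(\delta_u(h))_{(g(w))} = h_{(g_v(w'))}$, and the label of the left-hand side at $w$ unfolds, via the same identities together with $(g_v)_{(w')} = g_{(vw')}$, into an expression involving $((g_v)_{(g_v^{-1}(h(g_v(w'))))})^{-1}$, the middle factor $h_{(g_v(w'))}$, and $(g_v)_{(w')}$. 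The right-hand side, namely $(\delta_v(h^{g_v}))_{(w)} = (g_v^{-1}hg_v)_{(w')}$, expands to exactly the same product by applying the product and inverse label formulas inside $g_v^{-1} h g_v$. The two sides agree term-by-term.

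The main obstacle is purely bookkeeping: one must track the distinction between the label of an element at a vertex (an element of $A$) and the section at a vertex (an element of $A^{X^*}$), and carefully thread the inverse label identity through the nested compositions $g^{-1}\delta_u(h)g$ and $g_v^{-1} h g_v$. The paper wisely defers to the detailed argument in \cite{PatternClosure}; the only conceptual ingredient beyond the label and section identities already established is the fact that $g$ acts on $vX^*$ the way $g_v$ acts on $X^*$ under the natural identification of $vX^*$ with $X^*$, which is exactly the content of the self-similar structure of the wreath product.
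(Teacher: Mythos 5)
Your proposal is correct and is precisely the direct, label-by-label computation that the paper itself omits and defers to \cite[Lemma 1]{PatternClosure}; the case split on $w \in vX^*$ versus $w \notin vX^*$, the telescoping of the $g^{-1}$ and $g$ labels off the subtree, and the expansion of both sides to $\bigl((g_v)_{(g_v^{-1}(h(g_v(w'))))}\bigr)^{-1}\, h_{(g_v(w'))}\, (g_v)_{(w')}$ on the subtree all check out. You also correctly supply the one justification needed in the generalized (possibly non-faithful) setting, namely that equality of portraits suffices because the portrait map is a bijection.
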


We use this Lemma in the following Proposition.

\begin{proposition}\label{p:movement}
Let $G$ be a subgroup of $A^{X^*}$ such that $N_{A^{X^*}}(G)$ contains a self-similar, self-replicating, level-transitive subgroup. If $\delta_u(g) \in G$ for some $g \in G$ and $u \in X^n$, then $\delta_v(g) \in G$ for all $v \in X^n$. 
\end{proposition}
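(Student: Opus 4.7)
The plan is to exploit the conjugation formula of Lemma~\ref{l:movement} together with the three structural assumptions on the subgroup of the normalizer. Let $H \le N_{A^{X^*}}(G)$ be the self-similar, self-replicating, level-transitive subgroup. Given $g \in G$ and $u, v \in X^n$ with $\delta_u(g) \in G$, the goal is to produce an element $h \in H$ with the two properties (a) $h(v) = u$ and (b) $h_v = e$. Once such an $h$ is found, Lemma~\ref{l:movement} yields
\[
(\delta_u(g))^{h} \;=\; \delta_{h^{-1}(u)}\!\left(g^{(h_{h^{-1}(u)})}\right) \;=\; \delta_v\!\left(g^{h_v}\right) \;=\; \delta_v(g),
\]
and since $h$ normalizes $G$ and $\delta_u(g) \in G$, the left-hand side lies in $G$, giving the conclusion.

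To construct such an $h$, I would proceed in two steps. First, level-transitivity of $H$ produces some $h_1 \in H$ with $h_1(v) = u$; by self-similarity of $H$, the section $(h_1)_v$ is an element of $H$, hence so is its inverse. Second, applying the self-replicating property at the vertex $v$ to the element $((h_1)_v)^{-1} \in H$ yields an $h_3 \in \Stab_H(v)$ with $(h_3)_v = ((h_1)_v)^{-1}$. Set $h = h_1 h_3$. Then $h(v) = h_1(h_3(v)) = h_1(v) = u$, establishing (a). Using the section formula $(gh)_w = g_{h(w)}\, h_w$ from the earlier lemma, compute
\[
h_v \;=\; (h_1 h_3)_v \;=\; (h_1)_{h_3(v)}\,(h_3)_v \;=\; (h_1)_v\,((h_1)_v)^{-1} \;=\; e,
\]
establishing (b). Since $h \in H \subseteq N_{A^{X^*}}(G)$, conjugation by $h$ preserves $G$, and the displayed computation at the start finishes the proof.

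The only mildly delicate point is keeping track of which hypothesis does which work: level-transitivity moves $v$ to $u$ but gives no control over the section at $v$, while self-replicating lets us modify an element in $\Stab_H(v)$ to cancel that section without disturbing the action on $v$. The combination is what forces the section $h_v$ to be trivial, which is precisely the condition needed to strip away the extra conjugation inside $\delta_v(\,\cdot\,)$ in Lemma~\ref{l:movement}. Apart from this bookkeeping, no other obstacle arises; the argument is purely formal once the correct $h$ has been assembled.
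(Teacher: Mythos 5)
Your proof is correct and follows essentially the same route as the paper: level-transitivity supplies an element sending $v$ to $u$, self-similarity and the self-replicating property supply a $v$-stabilizing element cancelling its section at $v$, and Lemma~\ref{l:movement} plus normality of the conjugation action finishes the argument. The only cosmetic difference is that you multiply the two elements first and apply Lemma~\ref{l:movement} once to a product with trivial section at $v$, whereas the paper conjugates in two successive stages.
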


\begin{proof}
Suppose $\delta_u(g) \in G$ for some $u \in X^n$ and $g \in G$. Let $v \in X^n$ be arbitrary. Let $N = N_{A^{X^*}}(G)$ be the normalizer of $G$, and assume that $N$ contains a self-similar, self-replicating, level-transitive subgroup $M$. Since $M$ is level-transitive, there exists $f \in M$ such that $f(v) = u$. Since $M$ is self-similar, $(f_v)^{-1} \in M$, and since $M$ is self-replicating, there exists $f' \in \Stab_{M}(v)$ such that $(f')_v = (f_v)^{-1}$. Then $(\delta_u(g))^{ff'} \in G$ since $M$ normalizes $G$. Moreover, from these observations and Lemma \ref{l:movement}, it follows that \begin{alignat*}{2}
\left(\delta_u(g) \right)^{ff'} &= \left( (\delta_u(g))^f\right)^{f'} \\ 
&= \left( \delta_{v}(g^{f_v})  \right)^{f'} \qquad && \text{ since } v = f^{-1}(u) \\
&= \delta_{v}\left(g^{f_v(f_v)^{-1}}\right) \qquad && \text{ since } v = (f')^{-1}(v) \\
&= \delta_{v}(g) 
\end{alignat*}
\end{proof}

\subsection{Main Results}

We now have the necessary framework in place to prove our desired results. 

\begin{theorem}\label{t:main}
Let $G$ be a subgroup of $A^{X^*}$. If $N_{A^{X^*}}(G)$ contains a self-similar, self-replicating, level-transitive subgroup, then $G$ is a sofic tree shift group if and only if $G$ is a finitely constrained group. 
\end{theorem}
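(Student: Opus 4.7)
The reverse implication is immediate: every finitely constrained group is, by definition, a shift of finite type, and every SFT is sofic (see the discussion after the definition of Rabin/B\"uchi automata). So I focus on the forward direction, assuming $G$ is sofic and aiming to verify the hypotheses of the generalized structural theorem (Theorem~\ref{t:appendix} from the Appendix) to conclude that $G$ is finitely constrained. Soficity yields closedness of $G$ in $A^{X^*}$, and $G$ is self-similar as a tree shift group. The core task is then to establish symbolic branching over $\Triv_G(s)$ for some $s$; by Proposition~\ref{p:p-symbolic-branch}, this amounts to showing $\delta_x(g) \in G$ for every $g \in \Triv_G(s)$ and every $x \in X$.

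Fix an unrestricted Rabin automaton $\mathcal{A} = (S, X, A, \mathcal{T})$ with $|S| = N$ recognizing $G$, and take $s = 2N$. For $g \in \Triv_G(2N)$ with a chosen accepting homomorphism $\alpha_g$, Lemma~\ref{l:pigeonhole} furnishes $k(g) \leq 2N - 1$, and Corollary~\ref{c:shift} supplies a prefix $u \in X^{[k(g)]}$ together with a word $u' \in X^{|u|+1}$ such that $\delta_{u'}(g_u) \in \Triv_G(2N+1) \subseteq G$. The normalizer hypothesis, exploited through Proposition~\ref{p:movement}, promotes this one-position witness to the much stronger statement $\delta_w(g_u) \in G$ for \emph{every} $w \in X^{|u|+1}$. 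This is the engine of the argument: every use of the normalizer subgroup feeds into enlarging the set of positions at which a given section can be grafted.

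To pass from these shifted-section witnesses to the desired $\delta_x(g) \in G$, I plan to exploit the commuting decomposition $g = \prod_{v \in X^{|u|}} \delta_v(g_v)$, in which the factors commute because they have pairwise disjoint supports; applying $\delta_x$ yields $\delta_x(g) = \prod_{v \in X^{|u|}} \delta_{xv}(g_v)$. The distinguished factor $\delta_{xu}(g_u)$ lies in $G$ by the step above. For the remaining factors $\delta_{xv}(g_v)$ with $v \neq u$, I iterate the same reasoning on each section $g_v$. To ensure Corollary~\ref{c:shift} continues to apply to every needed section uniformly, I expect to need to replace $s = 2N$ by a sufficient enlargement (for instance $s = 4N$), so that each $g_v$ itself lies in $\Triv_G(2N)$ and the full apparatus kicks in a second time. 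An alternative route is to construct an accepting run for $\delta_x(g)$ directly via the Grafting Lemma (Lemma~\ref{l:grafting}) combined with the lemma preceding Corollary~\ref{c:shift}, which produces an accepting run $\alpha_e$ for $e_G$ agreeing with $\alpha_g$ on $X^{[k(g)]}$; one can then graft the run $\alpha_g$ onto the run $\alpha_e$ at position $x$, exploiting the matching of states on the shallow levels.

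The hardest step is precisely this bridging: assembling the many individual section-shifts produced by Corollary~\ref{c:shift} into the single global shift $\delta_x(g)$, and controlling the depth parameter $s$ uniformly so that the argument closes on itself. Once symbolic branching over $\Triv_G(s)$ is verified, $G$ is a closed, self-similar, symbolically branching subgroup of $A^{X^*}$, and Theorem~\ref{t:appendix} identifies $G$ as a finitely constrained group defined by forbidden patterns of size $s + 1$, completing the proof.
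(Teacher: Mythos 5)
Your overall architecture matches the paper's: the reverse direction is immediate, and the forward direction should go by showing that $G$ branches symbolically over $\Triv_G(2N)$ using Lemma~\ref{l:pigeonhole}, Corollary~\ref{c:shift}, and Proposition~\ref{p:movement}, then invoking Theorem~\ref{t:appendix}. But the step you flag as ``the hardest'' --- assembling the section-level witnesses into $\delta_x(g) \in G$ --- is exactly where your plan has a genuine gap, and neither of your two proposed routes closes it. Decomposing $g$ over the single level $X^{|u|}$ handles only the one factor $\delta_{xu}(g_u)$; for the remaining $v \in X^{|u|}$ you propose to ``iterate the same reasoning on each section $g_v$,'' but each such iteration again produces a witness only for \emph{one} sub-section $(g_v)_{u_v}$ at \emph{one} new prefix $u_v$ depending on the run restricted to $g_v$, leaving all other sub-sections to be treated in a further round. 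This recursion does not terminate, and no fixed enlargement of $s$ (e.g.\ $4N$) controls it, because the required depth grows with the number of rounds. Worse, Proposition~\ref{p:movement} only relocates $\delta_w(h)$ among positions $w$ of a \emph{fixed} length, so a witness $\delta_{u_v'}((g_v)_{u_v}) \in G$ at level $|u_v|+1$ cannot be pushed down to the position $x v u_v$ of length $1+|u|+|u_v|$ without already knowing the branching property you are trying to prove --- the argument is circular. Your alternative route (grafting the run $\alpha_g$ onto the run $\alpha_e$ at the vertex $x$) fails for a similar reason: grafting requires the state match $\alpha_e(x) = \alpha_g(\epsilon)$, which nothing guarantees; the Grafting Lemma is only usable at vertices where the run genuinely repeats a state.

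The paper's resolution is to decompose over an antichain adapted to the run rather than over a single level. For each $w \in X^{k(g)}$, Lemma~\ref{l:pigeonhole}(i) guarantees a repeated state on the path from $\epsilon$ to $w$; let $\mu(w)$ be the shortest prefix of $w$ at which this happens, and prune $\{\mu(w) : w \in X^{k(g)}\}$ to an antichain $C_g$ that is a cut of the tree (every $w \in X^{k(g)}$ has a prefix in $C_g$, no element is a proper prefix of another). Since all of $C_g$ lies within depth $k(g) \leq 2N-1$ and $g \in \Triv_G(2N)$, one gets the commuting factorization $g = \prod_{c \in C_g} \delta_c(g_c)$ in which \emph{every} factor, by construction, satisfies the repeated-state hypothesis of Corollary~\ref{c:shift}. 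One application of Corollary~\ref{c:shift} plus Proposition~\ref{p:movement} per factor then yields $\delta_{xc}(g_c) \in \Triv_G(2N+1)$ for all $c \in C_g$ simultaneously, and $\prod_{c \in C_g}\delta_{xc}(g_c) = \delta_x\bigl(\prod_{c \in C_g}\delta_c(g_c)\bigr) = \delta_x(g)$, with no iteration and no enlargement of $s$. You have all the right ingredients; the missing idea is this cut $C_g$.
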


\begin{proof}
Since every finitely constrained group is a sofic tree shift group, we only need to prove one direction. Since $G$ is a sofic tree shift group, there exists an unrestricted Rabin automaton $\mathcal{A}_G$ such that $\mathcal{L}\left( \mathcal{A} \right) = G$. Assume that $\mathcal{A}_G$ has a state set $S$ such that $|S| = N$. We will prove that $G$ is a regular branch group over the subgroup $\Triv_G(2N)$. Let $g \in \Triv_G(2N)$ and $\alpha_g: X^* \rightarrow S$ be a homomorphism by which $\mathcal{A}$ accepts $g$. Let $k = k(g)$.

For $w \in X^k$, let $\mu(w)$ be the shortest prefix of $w$ such that the state $\alpha_g(\mu(w))$  is repeated in the path from $\epsilon$ to $w$. Let 
\[ B_g = \{ \mu(w) \; \mid \; w \in X^k \}, \] 
and construct a set $C_g$ from $B_g$ as follows: if $b,b' \in B_g$ with $b$ a proper prefix of $b'$, remove $b'$. It is clear that after the inevitable termination of this procedure, $C_g$ satisfies the following conditions

\begin{itemize}
\item[(i)] for any $u \in X^k$, there is a prefix of $u$ in $C_g$ 
\item[(ii)] no word in $C_g$ is a proper prefix of another word in $C_g$
\end{itemize} 

Then, for any $g \in \Triv_G(2N)$, we can write $\displaystyle{g = \prod_{c \in C_g} \delta_c(g_c)}$. For distinct elements $c, c' \in C_g$, the elements $\delta_c(g_c)$ and $\delta_{c'}(g_{c'})$ commute, as they are supported on disjoint subtrees. Let $x \in X$. By Corollary~\ref{c:shift} and Proposition~\ref{p:movement}, the element $\delta_{xc}(g_c) \in \Triv_G(2N+1)$ for all $c \in C_g$. Hence $\Triv_G(2N+1)$ also contains the product $\displaystyle{\prod_{c \in C_g}\delta_{xc}(g_c)}$. Further, we note that 
\begin{align*} 
\prod_{c \in C_g} \delta_{xc}(g_c) 
 &= \prod_{c \in C_g}  \delta_x(\delta_c(g_c)) \\
 &= \delta_x\left(\prod_{c \in C_g} \delta_c(g_c)) \right) \\
 &= \delta_x(g).
\end{align*}  
Therefore $G$ is a symbolic branch group over the subgroup $\Triv_{G}(2N)$, and $G$ is finitely constrained. \\
\end{proof}

\begin{cor}
The closure of the odometer is not a sofic tree shift group.  
\end{cor}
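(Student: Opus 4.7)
The plan is to apply Theorem~\ref{t:main} to $G = \overline{\mathcal{O}}$. Two ingredients are required: that $\overline{\mathcal{O}}$ is not a finitely constrained group, and that its normalizer $N_{A^{X^*}}(\overline{\mathcal{O}})$ contains a self-similar, self-replicating, level-transitive subgroup. The first ingredient is already in hand: it is the content of the odometer example in Section~3, and it also follows from Proposition~\ref{p:algebraic-law-not-finitely-constrained} because $\mathcal{O}$ is cyclic and hence abelian, so it satisfies the commutator law. The contrapositive of Theorem~\ref{t:main} then yields the corollary as soon as the normalizer hypothesis is verified.

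For the second ingredient, the natural candidate subgroup of the normalizer is $\mathcal{O}$ itself. Since $\mathcal{O} \subseteq \overline{\mathcal{O}}$ and $\overline{\mathcal{O}}$ is a group, conjugation by any element of $\mathcal{O}$ carries $\overline{\mathcal{O}}$ into itself, so $\mathcal{O} \subseteq N_{A^{X^*}}(\overline{\mathcal{O}})$. It remains to verify the three structural properties of $\mathcal{O}$. Self-similarity is immediate from the defining recursion $a = \sigma(e,a)$: every section of $a$ is either $e$ or $a$, so every section of every power of $a$ is again a power of $a$. Level-transitivity follows from the identities
\[
 a^{2n} = \id(a^n, a^n), \qquad a^{2n+1} = \sigma(a^n, a^{n+1}),
\]
already recorded in the odometer example, which give by induction on $n$ that $a$ acts on $X^n$ as a single cycle of length $2^n$; this cyclic action is transitive on each level.

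For self-replication, fix $u \in X^n$ and $a^k \in \mathcal{O}$. Iterating the even-power identity above gives $a^{2^n k} = (a^k, a^k, \ldots, a^k)$ at level $n$, meaning that $a^{2^n k}$ stabilizes every vertex of $X^n$ (in particular $u$) and that $(a^{2^n k})_u = a^k$. Hence the section map $\Stab_{\mathcal{O}}(u) \to \mathcal{O}$, $g \mapsto g_u$, surjects onto $\mathcal{O}$, confirming self-replication. With all three properties in place, Theorem~\ref{t:main} implies that $\overline{\mathcal{O}}$ is a sofic tree shift group if and only if it is finitely constrained; since it is not finitely constrained, it is not sofic either.

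There is no real obstacle here, as the argument is a direct application of the main theorem to a transparent example. The only points worth taking care with are (i) arguing that $\mathcal{O}$ (rather than some ad hoc subgroup) genuinely sits inside the normalizer of $\overline{\mathcal{O}}$, which reduces to $\mathcal{O} \subseteq \overline{\mathcal{O}}$, and (ii) writing the self-replication step cleanly by exhibiting the explicit lift $a^{2^n k} \in \Stab_{\mathcal{O}}(u)$ with prescribed section $a^k$ at $u$.
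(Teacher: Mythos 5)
Your proposal is correct and follows essentially the same route as the paper: apply Theorem~\ref{t:main} with $\mathcal{O}$ itself serving as the self-similar, self-replicating, level-transitive subgroup of the normalizer of $\overline{\mathcal{O}}$, combined with the earlier fact that $\overline{\mathcal{O}}$ is not finitely constrained. Your explicit verifications of self-replication and level-transitivity (via $a^{2^n k}$ and the cycle structure on levels) are details the paper leaves implicit, but the argument is the same.
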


\begin{proof}
Let $\mathcal{O}$ represent the odometer group. Since $\mathcal{O}$ is a self-similar, self-replicating, level-transitive subgroup of $\overline{\mathcal{O}}$ and we have shown that $\overline{\mathcal{O}}$ is not finitely constrained, this result follows immediately from Theorem \ref{t:main}.
\end{proof}

\begin{proposition}\label{p:algebraic-law-not-sofic}
Let $G$ be a self-similar,  level-transitive group of tree automorphisms such that $G$, or its normalizer in $\Aut(X^*)$, contains a self-replicating, level-transitive subgroup. If $G$ satisfies an algebraic law, or $G$ has a nontrivial center, then the topological closure $\overline{G}$ is not Rabin-recognizable. 
\end{proposition}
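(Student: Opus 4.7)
The plan is to argue by contradiction: assuming $\overline{G}$ is Rabin-recognizable, I will show it is sofic (via a general lemma about closed Rabin-recognizable languages), then apply Theorem~\ref{t:main} to deduce it is finitely constrained, contradicting Proposition~\ref{p:algebraic-law-not-finitely-constrained}.

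First, I would transfer all relevant structure from $G$ to its closure $\overline{G}$. The conjugation map $g \mapsto h^{-1}gh$ is continuous, so any element normalizing $G$ also normalizes $\overline{G}$; combined with the inclusion $G \subseteq N_{\Aut(X^*)}(G)$, this shows $N_{\Aut(X^*)}(\overline{G})$ contains a self-similar, self-replicating, level-transitive subgroup under either of the two stated normalizer hypotheses. The group $\overline{G}$ is itself self-similar and level-transitive, and inherits any algebraic law or central element of $G$, by continuity of the group operations.

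The main technical step is the lemma: \emph{any topologically closed, Rabin-recognizable tree language $L \subseteq A^{X^*}$ is sofic}. Given a Rabin automaton $\mathcal{R} = (S,X,A,\mathcal{T},\mathcal{I},\mathcal{F})$ recognizing $L$, let $\mathcal{T}^{\star}$ consist of those bundles $(s, a, (s_x)_{x \in X}) \in \mathcal{T}$ admitting a \emph{witness}: some $h \in A^{X^*}$ with $h_{(\epsilon)} = a$ together with a successful run $\rho$ of $\mathcal{R}$ on $h$ satisfying $\rho(\epsilon) = s$ and $\rho(x) = s_x$ for every $x \in X$. Form the unrestricted Rabin automaton $\mathcal{A} = (S, X, A, \mathcal{T}^{\star})$. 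The inclusion $L \subseteq \mathcal{L}(\mathcal{A})$ is immediate: a successful run of $\mathcal{R}$ on $f \in L$ yields, by shifting at each vertex $v$, a witness showing $(\rho(v), f_{(v)}, (\rho(vx))_{x \in X}) \in \mathcal{T}^{\star}$. For the converse, given an accepting homomorphism $\alpha$ of $\mathcal{A}$ on $f$, construct for each $n$ a configuration $f_n \in L$ agreeing with $f$ on $X^{[n]}$ by grafting, at each $v \in X^n$, a witness configuration $h_v$ provided by membership of the bundle $(\alpha(v), f_{(v)}, (\alpha(vx))_{x \in X})$ in $\mathcal{T}^{\star}$; the spliced run matches $\alpha$ on the first $n$ levels and then follows the witness runs $\rho_v$, and is successful because every ray of $\partial X^*$ eventually enters one of the grafted subtrees, on which the tail behavior is dictated by a successful $\rho_v$. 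Then $f_n \to f$ and closedness of $L$ forces $f \in L$.

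To conclude, assume for contradiction that $\overline{G}$ is Rabin-recognizable. Since $\overline{G}$ is a topologically closed, shift-invariant subset of $\Aut(X^*)$, the lemma applies and $\overline{G}$ is a sofic tree shift group. Theorem~\ref{t:main}, applicable by the first paragraph, then promotes $\overline{G}$ to a finitely constrained group. This contradicts Proposition~\ref{p:algebraic-law-not-finitely-constrained}, since $\overline{G}$ is level-transitive and inherits either the algebraic law or the nontrivial center from $G$. The main obstacle is the closed/Rabin $\Rightarrow$ sofic lemma; the delicate point is arranging the witness gluing so that the label $f_{(v)}$ is matched exactly at each graft vertex, which is why $\mathcal{T}^{\star}$ must be indexed by full transition bundles rather than merely by states.
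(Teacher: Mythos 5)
Your overall route is exactly the paper's: pass to $\overline{G}$, which is closed, self-similar, level-transitive and inherits the law or the central element; deduce from Rabin-recognizability that $\overline{G}$ is a sofic tree shift; apply Theorem~\ref{t:main} to conclude it is finitely constrained; and contradict Proposition~\ref{p:algebraic-law-not-finitely-constrained}. Your transfer of the normalizer hypothesis to $\overline{G}$ via continuity of conjugation is correct and is a point the paper leaves implicit. The difference is that the paper disposes of the step ``closed, shift-invariant, Rabin-recognizable $\Rightarrow$ sofic'' by citing~\cite{SoficTreeShiftCA}, whereas you attempt to prove it, and your proof of that lemma has a genuine gap.

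The problem is the definition of $\mathcal{T}^{\star}$. Your witness for a bundle $(s,a,(s_x)_{x\in X})$ is an arbitrary configuration $h$ with a run from state $s$ satisfying the Rabin condition on all rays; nothing ties $s$ to the initial states $\mathcal{I}$ or to an actual successful run on an element of $L$. (If instead you insist that the witness run be a genuine run of $\mathcal{R}$, i.e.\ rooted in $\mathcal{I}$, then your forward inclusion $L\subseteq\mathcal{L}(\mathcal{A})$ fails, since the shifted run at $v$ starts at $\rho(v)\notin\mathcal{I}$ in general.) Under your reading, $\mathcal{T}^{\star}$ may contain bundles from a part of $\mathcal{R}$ never reachable from $\mathcal{I}$, and then $\mathcal{L}(\mathcal{A})\subseteq L$ fails. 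Concretely: let $|X|=1$, $A=\{0,1\}$, $S=\{p,q\}$, $\mathcal{I}=\{p\}$, transitions $(p;0;p)$ and $(q;1;q)$, and $\mathcal{F}=\{\{p\},\{q\}\}$. Then $L=\{0^{\infty}\}$ is closed and shift-invariant, but $(q;1;q)$ is witnessed by $h=1^{\infty}$ with the constant run $q$, so your $\mathcal{A}$ accepts $1^{\infty}\notin L$; correspondingly, your spliced run on $f_n$ need not start in $\mathcal{I}$, which is exactly where the claim ``$f_n\in L$'' breaks. The standard repair is to admit a bundle into $\mathcal{T}^{\star}$ only when it occurs at some vertex $v$ of a genuine successful run (rooted in $\mathcal{I}$) on some element of $L$; the approximants are then assembled inside that witnessing configuration below $v$ and brought back to the root by applying $\sigma_v$, which is where shift-invariance of $L$ is indispensable. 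Indeed your lemma is false as stated without shift-invariance: the set of configurations with a prescribed root label is closed and Rabin-recognizable, but it is not a tree shift and hence not sofic.
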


\begin{proof}
 Note that $\overline{G}$ is self-similar, level-transitive, and topologically closed. Suppose $\overline{G}$ is Rabin-recognizable. Then it would be a sofic tree shift by~\cite[Theorem 1.7]{SoficTreeShiftCA}, and hence, by Theorem~\ref{t:main}, a finitely constrained group.  By Proposition~\ref{p:algebraic-law-not-finitely-constrained}, this implies that $G$ can not obey an algebraic law or have a nontrivial center. 
\end{proof}

\appendix

\section{Characterization of Generalized Finitely Constrained Groups} 

\begin{theorem}\label{t:appendix}
Let $G$ be a subset of the full tree shift $A^{X^*}$ and $n \geq 1$. The following are equivalent. 

(i) $G$ is a finitely constrained group defined by patterns of size $s$. 

(ii) $G$ is a closed, self-similar group branching symbolically over $\Triv_G(s - 1)$. 

(iii) $G$ is the closure of a self-similar group $H$ which is branching symbolically over $\Triv_H(s - 1)$. 
\end{theorem}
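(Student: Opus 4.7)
The plan is to prove the cyclic implications (i) $\Rightarrow$ (ii) $\Rightarrow$ (iii) $\Rightarrow$ (i). The implication (ii) $\Rightarrow$ (iii) is immediate, since a closed self-similar group is its own closure and can serve as its own $H$. For (i) $\Rightarrow$ (ii), closedness and self-similarity of $G$ follow from the correspondence between shift-invariance and self-similarity from Subsection~\ref{ss:group-structure}, so the only nontrivial point is symbolic branching. Given $h_0, \ldots, h_{k-1} \in \Triv_G(s-1)$, I would verify that $f = (h_0, \ldots, h_{k-1})$ contains no forbidden block of size $s$. Because each $h_i$ has trivial labels on $X^{(s-1)}$, the size-$s$ block of $f$ at the root is entirely trivial and hence coincides with the root block of $e_G \in G$. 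Every other size-$s$ block in $f$ is rooted at some $w = xu$ with $x \in X$ and agrees with the size-$s$ block of $h_x$ at $u$, which is allowed since $h_x \in G$. Hence $f \in G$, and since the first $s-1$ levels are trivial, $f \in \Triv_G(s-1)$.

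The main direction is (iii) $\Rightarrow$ (i). Let $\mathcal{F}$ denote the collection of size-$s$ blocks that do not appear in any element of $H$, set $Y = \mathcal{X}_{\mathcal{F}}$, and observe that $\overline{H} \subseteq Y$ is immediate from the definition. The heart of the proof is the reverse inclusion, which I plan to prove by showing by induction on $n \geq s-1$ that every $f \in Y$ admits an approximant $h_n \in H$ with $h_n|_{X^{[n]}} = f|_{X^{[n]}}$. For the base case $n = s-1$, the size-$s$ root block of $f$ appears at some vertex $u$ of some $h \in H$, and self-similarity of $H$ promotes the section $h_u$ to an element of $H$ whose root block matches that of $f$.

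For the inductive step, assume $h_n \in H$ agrees with $f$ on $X^{[n]}$. For each $w \in X^{n-s+2}$, the size-$s$ block of $f$ at $w$ is allowed, hence appears at the root of some $g_w \in H$ (again using self-similarity). Since $h_n$ already matches $f$ through level $n$, the sections $(h_n)_w$ and $g_w$ coincide on $X^{(s-1)}$, so the correction $t_w = ((h_n)_w)^{-1} g_w$ lies in $\Triv_H(s-1)$. Iterated symbolic branching over $\Triv_H(s-1)$, combined with part (iii) of the $\delta$-lemma, then gives $\delta_w(t_w) \in H \cap \Triv(n+1)$; since these corrections have pairwise disjoint supports, their product $p = \prod_{w \in X^{n-s+2}} \delta_w(t_w)$ lies in $H$. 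Setting $h_{n+1} = h_n \cdot p$ yields an element of $H$ that agrees with $f$ on $X^{[n]}$ (because $p$ is trivial there) and, by construction of the $g_w$, now also agrees with $f$ at level $n+1$.

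The main technical obstacle is the section bookkeeping in the inductive step: in the semidirect-product setting where $A$ acts on $X$ non-faithfully, one must use the identity $(gh)_w = g_{h(w)} h_w$ together with the fact that $p$ has trivial labels on levels $0, \ldots, n$ to justify that the sections of $h_n \cdot p$ at the vertices $w \in X^{n-s+2}$ simplify exactly to $g_w$ on the required depth. This is the computation carried out in~\cite{Problems} and~\cite{HausdorffDim} for tree automorphisms; I expect it to transfer without essential change once $\Stab_H(n)$ is consistently replaced by $\Triv_H(n)$ and the section identity above is used in place of the faithful-action version.
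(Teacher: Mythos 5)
Your proposal is correct, and the overall strategy for the hard direction coincides with the paper's: take $\mathcal{F}$ to be the size-$s$ blocks that appear in no element of the group, observe the easy inclusion into $\mathcal{X}_{\mathcal{F}}$, and prove the reverse inclusion by inductively building elements of $H$ that agree with a given $f \in \mathcal{X}_{\mathcal{F}}$ on $X^{[n]}$. Where you genuinely diverge is in the organization of that induction and of the equivalences. The paper isolates the closed case, proving (ii) $\Rightarrow$ (i) and then separately showing (iii) $\Rightarrow$ (ii) by noting that closures of self-similar groups are self-similar groups and that symbolic branching passes to closures; you fold these into a single (iii) $\Rightarrow$ (i). More substantively, the paper's inductive step runs simultaneously over all configurations in $\mathcal{X}_{\mathcal{F}}$: it forms the discrepancy $f = g_n^{-1}g \in \Triv(n+1)$, applies the inductive hypothesis to the first-level sections $f_x$, and assembles the correction as $\prod_{x\in X}\delta_x(f^{(x)})$ using branching over $\Triv_G(n)$ for all $n \geq s-1$. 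You instead fix one configuration and correct at depth $n-s+2$ directly, choosing fresh witnesses $g_w \in H$ for the blocks of $f$ at that depth and pushing the corrections $t_w \in \Triv_H(s-1)$ down via iterated $\delta$; this avoids the simultaneous induction and only ever uses branching over $\Triv_H(s-1)$ itself, at the cost of the deeper section bookkeeping you flag. That bookkeeping does close up: since $(h_n)_w$ and $g_w$ carry the same labels on $X^{[s-2]}$, they act identically on $X^{[s-1]}$, so $(t_w)_{(z)} = \bigl(((h_n)_w)_{(z)}\bigr)^{-1}(g_w)_{(z)}$ for $|z| = s-1$ and the product $h_n\cdot\prod_w\delta_w(t_w)$ acquires exactly the labels of $f$ at level $n+1$. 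The only point worth adding explicitly is the one the paper records in its (iii) $\Rightarrow$ (ii) step, namely that $\overline{H}$ is itself a self-similar group (sections of products are products of sections), so that the set equality $\overline{H} = \mathcal{X}_{\mathcal{F}}$ really does yield a finitely constrained \emph{group}.
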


\begin{proof}
$(i) \implies (ii).$
Let $G$ be a finitely constrained group defined by a set $\mathcal{F}$ of forbidden patterns of size $s$. Since $G$ is a tree shift group, it is closed and self-similar. Let $g \in \Triv_{G}(s-1)$ and $x \in X$. Observe that $\delta_x(g)$ is an element of $\Triv(s)$, and we must show it is also in $G$. Since $\delta_x(g) \in \Triv(s)$, the pattern of size $s$ appearing at the root in $\delta_x(g)$ is trivial (all labels are equal to $e_A$). The first level sections of $\delta_x(g)$ are either $g$ or trivial, so no patterns from $\mathcal{F}$ appear in any section of $\delta_x(g)$. As no forbidden patterns occur anywhere in $\delta_x(g)$, it follows that $\delta_x(g) \in G$. Thus $G$ is branching symbolically over $\Triv_G(s-1)$. 

$(ii) \implies (i)$. Let $G$ be a closed, self-similar group branching symbolically over $\Triv_{G}(s-1)$. We claim that $G$ is branching symbolically over $\Triv_{G}(n)$ for every $n \geq s-1$. Indeed, for $n \geq s-1$, if $g \in \Triv_{G}(n)$, then $g \in \Triv_{G}(s-1)$, which implies that $\delta_x(g) \in \Triv_{G}(s-1)$, for $x \in X$. In particular, $\delta_x(g) \in G$, for $x \in X$, which implies that $\delta_x(g) \in \Triv_G(n)$.

Let $\mathcal{A}$ be the set of patterns of size $s$ that appear in at least one element of $G$ at least one vertex. By self-similarity, these are exactly the same patterns of size $s$ that appear in the elements of $G$ at the root. Let $\mathcal{F}$ be the set of patterns of size $s$ that are not in $\mathcal{A}$ and $G_{\mathcal{F}}$ be the set of all portraits that do not contain any patterns from $\mathcal{F}$. The set $G_{\mathcal F}$ is closed and self-similar, since it is a tree shift. By the definition of $\mathcal{F}$, we have $G \subseteq G_{\mathcal{F}}$. 

It remains to show that $G_{\mathcal{F}} \subseteq G$, which would show that $G=G_{\mathcal{F}}$ and, therefore, $G$ is a finitely constrained group defined by patterns of size $s$.  

For every $g \in G_{\mathcal{F}}$, we will construct a sequence $\{g_n\}_{n=0}^\infty$ of elements in $G$ such that the portraits of $g_n$ and $g$ agree on $X^{[n]}$, for $n \geq 0$. Since $G$ is closed this would show that $G_{\mathcal{F}} \subseteq G$, completing the proof.  

We proceed by induction on $n$ and, simultaneously, all $g \in G_{\mathcal{F}}$. Since $g \in G_{\mathcal{F}}$ avoids the forbidden patterns, its pattern of size $s$ at the root coincides with the pattern of size $s$ of some element $g_0 \in G$ at the root. Thus, we may set $g_n = g_0$, for $n=0,\dots,s-1$. Assume that, for some fixed $n \geq s-1$ and every $g \in G_{\mathcal{F}}$, there exists an element $g_n \in G$ such that the portraits of $g_n$ and $g$ agree on $X^{[n]}$. Let $f=g_n^{-1}g$. Since $g_n \in G \subseteq G_{\mathcal{F}}$, we have that $f \in G_{\mathcal{F}}$. By the self-similarity of $G_{\mathcal{F}}$, the sections $f_x$, for $x \in X$, are also in $G_{\mathcal{F}}$. By the inductive hypothesis, there exist elements $f^{(x)} \in G$, for $x \in X$, such that the portraits of $f^{(x)}$ and $f_x$ agree on $X^{[n]}$ (the superscript in $f^{(x)}$ is for indexing purposes). Since $g_n$ and $g$ agree on $X^{[n]}$, we have $f = g_n^{-1}g \in \Triv(n+1)$ and, consequently, $f_x \in \Triv(n)$, for $x \in X$. Since $f_x$ and $f^{(x)}$ agree on $X^{[n]}$ and $f^{(x)} \in G$, we obtain that $f^{(x)} \in \Triv_G(n)$, for $x \in G$. But $n \geq s-1$ and $G$ is symbolically branching over $\Triv_G(n)$, which implies that $f' = \prod_{x \in X} \delta_x(f^{(x)}) \in \Triv_G(n)$. Thus, $f'$ is an element of $G$ and the portraits of $f'$ and $f=g_n^{-1}g$ agree on $X^{[n+1]}$, implying that the portraits of $g_nf'$ and $g$ agree on $X^{[n+1]}$. Thus, we may set $g_{n+1} = g_nf'$ and, since $g_nf' \in G$, this completes the induction step.  

$(ii) \implies (iii)$. Set $H=G$. 

$(iii) \implies (ii)$. The set $G$ is a self-similar group since the closure of a subgroup of $A^{X^*}$ is a subgroup and the closure of a self-similar set is self-similar (both of these follow from the fact that sections of products are products of sections). For all $n$, a convergent sequence in $A^{X^*}$ converges to an element in $\Triv(n)$ if and only if its members are eventually in $\Triv(n)$. Since $H$ is branching symbolically over $\Triv_H(s-1)$, its closure $G$ is branching symbolically over $\Triv_G(s-1)$.

\end{proof}

\end{document}